\newtheorem{theorem}{Theorem}[section]
\newtheorem{lemma}[theorem]{Lemma}
\newtheorem{proposition}[theorem]{Proposition}
\newtheorem{corollary}[theorem]{Corollary}
\newtheorem{conjecture}[theorem]{Conjecture}
\theoremstyle{remark}
\newtheorem{remark}[theorem]{Remark}
\theoremstyle{definition}
\theoremstyle{remark}
\newtheorem{example}[theorem]{Example}
\numberwithin{equation}{section}
\newcommand{\Z}{\mathbb{Z}}
\newcommand{\Q}{\mathbb{Q}}
\newcommand{\R}{\mathbb{R}}
\newcommand{\prob}{\mathbf{P}}
\newcommand{\F}{\mathbb{F}}
\newcommand{\E}{\mathbb{E}}
\newcommand{\pp}{\mathfrak{p}}
\newcommand{\PP}{\mathfrak{P}}
\newcommand{\QQ}{\mathfrak{Q}}
\newcommand{\Gal}{\text{Gal}}
\newcommand{\un}{\underline}
\begin{document}

\title[Sharpness of the larger sieve]{Polynomial values modulo primes on average and sharpness of the larger sieve}
\author{Xuancheng Shao}
\address{Department of Mathematics \\ Stanford University \\
450 Serra Mall, Bldg. 380\\ Stanford, CA 94305-2125}
\email{xshao@math.stanford.edu}

\maketitle

\begin{abstract}
This paper is motivated by the following question in sieve theory. Given a subset $X\subset [N]$ and $\alpha\in (0,1/2)$. Suppose that $|X\pmod p|\leq (\alpha+o(1))p$ for every prime $p$. How large can $X$ be? On the one hand, we have the bound $|X|\ll_{\alpha}N^{\alpha}$ from Gallagher's larger sieve. On the other hand, we prove, assuming the truth of an inverse sieve conjecture, that the bound above can be improved (for example, to $|X|\ll_{\alpha}N^{O(\alpha^{2014})}$ for small $\alpha$). The result follows from studying the average size of $|X\pmod p|$ as $p$ varies, when $X=f(\Z)\cap [N]$ is the value set of a polynomial $f(x)\in\Z[x]$.
\end{abstract}

\section{Introduction}

For a positive integer $N$, denote by $[N]$ the set $\{1,2,\cdots,N\}$. The letter $p$ is always used to denote a prime. To primary goal of this paper is to study upper bounds for the sizes of subsets $X\subset [N]$ occupying a small fraction of residue classes modulo many primes $p$. Gallagher's larger sieve \cite{Gal71} provides such an upper bound.

\begin{theorem}[Larger sieve]\label{thm:gallagher}
Let $X\subset [N]$ be a subset and $\mathcal{P}$ be a set of primes. We have
\[ |A|\leq\frac{\sum_{p\in\mathcal{P}}\log p}{\sum_{p\in\mathcal{P}}|X\pmod p|^{-1}\log p-\log N}, \]
whenever the denominator is positive.
\end{theorem}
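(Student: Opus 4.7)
The plan is to use a standard double-counting argument on the set of pairs $(x,y)\in X\times X$ that are congruent modulo primes $p\in\mathcal{P}$, weighted by $\log p$. The whole proof amounts to bounding the quantity
\[ T \assign \sum_{p\in\mathcal{P}}(\log p)\cdot\#\{(x,y)\in X\times X: x\equiv y\pmod p\} \]
in two different ways and comparing.

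For the lower bound on $T$, fix $p$ and let $X_a^{(p)}\assign|\{x\in X: x\equiv a\pmod p\}|$. Then the inner count equals $\sum_a (X_a^{(p)})^2$. Since only $|X\bmod p|$ residues $a$ contribute a nonzero term, the Cauchy--Schwarz inequality
\[ |X|^2 = \Bigl(\sum_a X_a^{(p)}\Bigr)^2 \leq |X\bmod p|\cdot \sum_a (X_a^{(p)})^2 \]
gives $\sum_a(X_a^{(p)})^2\geq |X|^2/|X\bmod p|$. Summing over $p$ yields $T\geq |X|^2\sum_{p\in\mathcal{P}}\log p/|X\bmod p|$.

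For the upper bound, split $T$ into the diagonal contribution $x=y$, which equals $|X|\sum_{p\in\mathcal{P}}\log p$, and the off-diagonal contribution. For the off-diagonal part, swap the order of summation: for each pair $x\neq y$ in $X$, the primes $p\in\mathcal{P}$ dividing $x-y$ contribute at most $\log|x-y|\leq \log N$ in total, since $|x-y|\leq N$ and the $p\mid x-y$ are distinct. Thus
\[ T \leq |X|\sum_{p\in\mathcal{P}}\log p + |X|(|X|-1)\log N \leq |X|\sum_{p\in\mathcal{P}}\log p + |X|^2\log N. \]

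Combining the two bounds and dividing by $|X|$ gives $|X|\sum_{p}\log p/|X\bmod p|\leq \sum_p\log p + |X|\log N$, which rearranges precisely to the stated inequality whenever the denominator is positive. There is no real obstacle here: the only delicate step is the off-diagonal bound, which relies on the elementary but crucial observation that $\prod_{p\mid n, p\in\mathcal{P}} p \leq |n|$, yielding the $\log N$ factor. Everything else is Cauchy--Schwarz and rearrangement.
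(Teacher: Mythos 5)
Your proof is correct and is the standard double-counting argument for Gallagher's larger sieve; the paper does not reprove it but cites Gallagher \cite{Gal71}, and indeed in Section \ref{sec:rem-sharp} it recalls exactly this scheme (lower bound via Cauchy--Schwarz on residue-class counts, upper bound via the diagonal term plus the observation $\sum_{p\mid x-x'}\log p\le\log N$). Nothing further to add.
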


See \cite{CE04} for some variants of it and references therein for applications. We are particularly interested in the situation when $|X\pmod p|\leq \alpha p$ for some fixed $\alpha\in (0,1)$, and whether the bound provided by the larger sieve is best possible. 

\begin{corollary}[Larger sieve, special case]\label{cor:gallagher}
Let $X\subset [N]$ be a subset and $\alpha\in (0,1/2]$. If $|X\pmod p|\leq (\alpha+o(1)) p$ for every prime $p$, then $|X|\ll N^{\alpha+o(1)}$.
\end{corollary}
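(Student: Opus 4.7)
The plan is to apply Theorem \ref{thm:gallagher} directly, with $\mathcal{P}$ taken to be the set of primes in an interval $[Q_0, Q]$ for parameters $Q_0 \leq Q$ to be optimized. The hypothesis $|X\pmod p| \leq (\alpha + o(1))p$ (with the $o(1)$ tending to zero as $p \to \infty$) gives the pointwise lower bound
\[ |X\pmod p|^{-1}\log p \geq \left(\frac{1}{\alpha} - o(1)\right)\frac{\log p}{p} \]
for every prime $p \geq Q_0$, once $Q_0$ is large enough. Summing over $\mathcal{P}$ and invoking the Chebyshev prime number bound $\sum_{p \leq Q}\log p = (1+o(1))Q$ together with Mertens' estimate $\sum_{p \leq Q}(\log p)/p = \log Q + O(1)$ (both of which are essentially unaffected by truncating away primes $p < Q_0$ provided $Q_0$ grows much more slowly than $Q$), Theorem \ref{thm:gallagher} then gives
\[ |X| \leq \frac{(1+o(1))Q}{\left(\tfrac{1}{\alpha} - o(1)\right)\log Q - \log N}. \]

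The next step is to pick $Q$ just barely large enough to make the denominator positive. I would set $Q = N^{\alpha(1+\eta)}$ where $\eta = \eta(N) > 0$ tends to zero slowly with $N$, slowly enough to dominate the implicit $o(1)$'s appearing above. With this choice the denominator becomes of order $\eta \log N$, the numerator is $N^{\alpha(1+\eta)}$, and the bound reads
\[ |X| \ll \frac{N^{\alpha + \alpha\eta}}{\eta \log N} = N^{\alpha + o(1)}, \]
which is exactly the claimed bound.

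The only real obstacle is bookkeeping: the $o(1)$ in the hypothesis depends on $p$ in an unspecified way, and one has to choose $Q_0 = Q_0(N)$ tending to infinity fast enough to absorb this decay, while simultaneously verifying that the primes $p < Q_0$ thereby excluded from $\mathcal{P}$ do not damage either of the two prime sums. This is straightforward, since taking $Q_0$ to be a very slowly growing function of $N$ (for instance any function with $\log Q_0 = o(\log Q)$ and $Q_0 = o(Q)$) makes the omitted contributions smaller than the respective main terms by the same Chebyshev and Mertens estimates. No deeper input from sieve theory is needed; the corollary is essentially a computation that optimizes the range of primes used in Gallagher's inequality.
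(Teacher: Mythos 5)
Your proof is correct and follows exactly the strategy the paper indicates: the paper states that the corollary is deduced from Theorem \ref{thm:gallagher} by taking $\mathcal{P}$ to be the primes up to $N^{\alpha+o(1)}$, and your argument is a careful fleshing-out of that one-line hint, with the $[Q_0,Q]$ truncation and Chebyshev/Mertens bookkeeping being the natural way to manage the $o(1)$ in the hypothesis.
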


This is easily deduced from Theorem \ref{thm:gallagher} by taking $\mathcal{P}$ to be the set of primes up to $N^{\alpha+o(1)}$. When $\alpha>1/2$, the statement still holds, but is beaten by the bound $|X|\ll_{\alpha}N^{1/2}$ following from the large sieve \cite{Mon78}. When $\alpha\leq 1/2$, is the bound $|X|\ll N^{\alpha+o(1)}$ sharp? If $X$ is the set of perfect squares up to $N$, then $|X|\sim N^{1/2}$ and $X$ occupies $(p+1)/2$ residue classes (the quadratic residues) modulo any odd prime $p$. The question of whether this is the only type of sharp example is usually referred to as the inverse sieve conjecture, informally stated as follows.

\begin{conjecture}[Inverse sieve conjecture, rough form]
Let $X\subset [N]$ be a subset. If $|X\pmod p|\leq 0.9p$ for every prime $p$, then either one of the following two statements holds:
\begin{enumerate}
\item the cardinality of $X$ is extremely small;
\item the set $X$ possesses algebraic structure.
\end{enumerate} 
\end{conjecture}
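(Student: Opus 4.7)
Since the statement is informal, the first task is to choose a precise version. A natural one: there exist absolute constants $c, C > 0$ such that if $|X| \geq N^c$ and $|X \pmod p| \leq 0.9\, p$ for every prime $p$, then all but $o(|X|)$ elements of $X$ lie in the value set $f(\Z) \cap [N]$ of some polynomial $f \in \Z[x]$ of degree at most $C$. With this in hand, the plan is to combine the quantitative larger sieve (Theorem~\ref{thm:gallagher}) with additive combinatorics, then pass to algebraic structure.

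First, I would leverage Theorem~\ref{thm:gallagher} together with the density assumption $|X| \geq N^c$: if $|X \pmod p|$ were as large as $0.9\,p$ for most primes $p \leq N^{\alpha}$, the larger sieve would immediately yield $|X| \ll N^{0.9 + o(1)}$, which for small enough $\alpha$ must be pushed further to force $|X \pmod p|$ to be substantially smaller than $0.9\,p$ on average. This is precisely the quantitative regime studied in the main body of the paper for polynomial value sets. Second, I would try to convert this strong average modular smallness into additive structure on $X$, via a Fourier identity expressing $|X \pmod p|$ as a character sum, and then invoke Pl\"unnecke--Ruzsa and a Freiman-type theorem to embed a large subset of $X$ in a generalized arithmetic progression. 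Third, I would upgrade this additive scaffold to an algebraic one by matching residue patterns across many primes via the Chinese remainder theorem and Lagrange interpolation, isolating the polynomial $f$ whose value set is consistent with the observed reductions.

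The decisive obstacle is that small modular image does not straightforwardly imply small additive doubling. Indeed, $f(\Z) \cap [N]$ for $f$ of degree $d \geq 2$ has sumset of size $\gg |X|^2$ in $\Z$ and large image modulo $p$, yet still satisfies the hypothesis of the conjecture; so the standard additive-combinatorial route cannot literally work. One must detect \emph{polynomial} structure directly---likely through higher-order Fourier analysis, Weil-type estimates over $\F_p$ that force residue sets to arise from varieties, or a quantitative Bilu--Tichy type theorem identifying when two polynomial value sets coincide on a positive density of integers. Fitting any of these tools to the present setting, where we have only one-variable data (the residue-class image) rather than higher-order correlation data, is what keeps the conjecture genuinely open, and I expect the best realistic outcome of such a plan is a conditional theorem: a suitable inverse theorem for sumsets over $\F_p$, or a nilpotent analogue thereof, would imply the inverse sieve conjecture.
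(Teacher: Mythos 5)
This statement is a conjecture, not a theorem, and the paper offers no proof of it; the precise formulation intended by the authors is Conjecture~\ref{conj:inverse} in Section~\ref{sec:inverse-sieve}, which is taken as a hypothesis in Theorem~\ref{thm:improved-larger} rather than established. You correctly recognize that the statement must first be made precise and that, once made precise, it is genuinely open, so there is no paper proof for your attempt to be compared against. Your meta-discussion of the main obstruction---that a small residue image modulo every prime does not yield small additive doubling, since $f(\Z)\cap[N]$ for $\deg f\geq 2$ has $|A+A|\gg|A|^2$---is accurate and is indeed the reason Freiman/Pl\"unnecke--Ruzsa machinery cannot be the whole story here.

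That said, the precise formulation you choose is strictly stronger than the one the paper adopts, in a way the paper explicitly warns against. You ask that ``all but $o(|X|)$ elements of $X$'' lie in a single $f(\Z)\cap[N]$; the remark following Conjecture~\ref{conj:inverse} points out that this fails because $X$ might be the union of value sets of several distinct polynomials, and the correct conclusion is only that some $f$ captures a \emph{positive proportion} (at least $C^{-1}|X|$) of $X$. The paper's version also allows $f\in\Q[x]$ of height $N^{O(1)}$ with degree in a bounded range $[2,C(\epsilon)]$, and---crucially---phrases the hypothesis as an average bound $\sum_{p\le Q}|X\bmod p|/p\le(1-\epsilon)\pi(Q)$ for every $Q\geq N^{\epsilon}$, which is weaker and more flexible than a pointwise $|X\bmod p|\le 0.9p$ and is exactly what makes the iterative argument in Proposition~\ref{prop:inverse-large-deg} go through. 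If you intend to propose a precise form, it should be modeled on Conjecture~\ref{conj:inverse}; your version, as stated, has known counterexamples of the union-of-value-sets type and thus cannot be the right target.
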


See Conjecture \ref{conj:inverse} below for one precise formulation of it. See also \cite{CL07,HV09,Wal12,GH14} for more discussions and evidences towards it.

Now assume that $\alpha<1/2$ is fixed. Motivated by the inverse sieve conjecture, we consider the sizes of $X\pmod p$ when $X$ is the value set of a polynomial. For a polynomial $f(x)\in \Z[x]$ of degree $d\geq 1$, denote by $f_p\in\F_p[x]$ the reduction $f\pmod p$. Let $\alpha_p(f)=p^{-1}|f_p(\F_p)|$, the relative size of the value set of $f\pmod p$. Define $\alpha(f)$ to be the average of $\alpha_p(f)$ as $p$ varies:
\[ \alpha(f)=\lim_{Q\rightarrow\infty}\frac{1}{\pi(Q)}\sum_{p\leq Q}\alpha_p(f). \]
Note the trivial lower bounds $\alpha_p(f)\geq d^{-1}$ and $\alpha(f)\geq d^{-1}$. 

\begin{theorem}[Polynomial values modulo primes on average]\label{thm:poly}
Let $f\in \Z[x]$ be a polynomial of degree $d\geq 1$. Then
\begin{equation}\label{eq:alpha-rec} 
\lim_{Q\rightarrow\infty} \frac{1}{\pi(Q)}\sum_{p\leq Q}\alpha_p(f)^{-1}\leq \tau(d), 
\end{equation}
where $\tau(d)$ is the number of positive divisors of $d$. Consequently, we have $\alpha(f)\geq\tau(d)^{-1}$.
\end{theorem}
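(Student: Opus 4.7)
My plan is to bound $\alpha_p(f)^{-1}$ above by a Cauchy--Schwarz quantity, interpret that quantity as counting $\F_p$-points on the bivariate curve $\{f(x_1)=f(x_2)\}$, evaluate its average over primes using Lang--Weil together with Chebotarev's density theorem, and reduce the theorem to an algebraic bound on the number of $\Q$-irreducible factors of $f(x_1)-f(x_2)$. For the first step, for each prime $p$ set $n_y := |f_p^{-1}(y)|$, so that $\sum_{y \in \F_p} n_y = p$ and $p\,\alpha_p(f) = \#\{y : n_y \geq 1\}$. Cauchy--Schwarz then gives
\[ p^2 = \Big(\sum_y n_y \cdot \mathbf{1}_{n_y \geq 1}\Big)^2 \leq \Big(\sum_y n_y^2\Big) \cdot p\,\alpha_p(f), \]
so that $\alpha_p(f)^{-1} \leq N_p/p$ where $N_p := \sum_y n_y^2 = \#\{(x_1,x_2)\in\F_p^2 : f_p(x_1)=f_p(x_2)\}$.

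Next, I factor $F(x_1,x_2) := f(x_1)-f(x_2) = \prod_{j=1}^k F_j(x_1,x_2)$ into $\Q$-irreducibles. Note $F$ is squarefree (its partial derivatives, $f'(x_1)$ and $-f'(x_2)$, are polynomials in a single variable and hence share no factor with $F$), so the $F_j$ are distinct and their pairwise intersections $\{F_j = F_{j'} = 0\}$ are zero-dimensional. Each $\Q$-irreducible $F_j$ may split over $\ov\Q$ into several absolutely irreducible components, on which $\Gal(\ov\Q/\Q)$ acts transitively. Lang--Weil produces $p + O(\sqrt{p})$ $\F_p$-points on each absolute component defined over $\F_p$, while Frobenius-moved components contribute none; Chebotarev then gives an average over primes of exactly $1$ $\F_p$-defined component per $F_j$. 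Summing over $j$ and absorbing the $O(1)$ correction from intersections,
\[ \lim_{Q\to\infty} \frac{1}{\pi(Q)} \sum_{p\leq Q} \frac{N_p}{p} = k, \]
and combining with the first step reduces the theorem to showing $k \leq \tau(d)$.

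This final algebraic step is the main obstacle. I would interpret $k$ Galois-theoretically: let $L$ be the splitting field of $f(x)-t$ over $\Q(t)$ and $G = \Gal(L/\Q(t))$ the arithmetic monodromy group, acting on the root set $\Omega$ of $f(x)-t$. Then $k$ equals the rank of this permutation representation, i.e.\ the number of orbits of $G$ on $\Omega\times\Omega$. Since $f$ is a polynomial, $\infty$ is totally ramified in the cover $f:\mathbb{A}^1\to\mathbb{A}^1$, so inertia at $\infty$ contributes a $d$-cycle $\sigma \in G$ whose orbits on $\Omega\times\Omega$ are indexed by $\Z/d\Z$, yielding the crude bound $k\leq d$. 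The refinement to $\tau(d)$ uses the arithmetic ingredient: elements of $G$ corresponding to $\Gal(\Q(\zeta_d)/\Q) = (\Z/d\Z)^*$ act on the $\langle\sigma\rangle$-orbits by multiplication, merging them into the $\tau(d)$ classes indexed by the divisors of $d$. In the model case $f(x) = x^d$, this is the classical cyclotomic factorization
\[ x^d - y^d = \prod_{e \mid d} \Phi_e^*(x,y), \]
which achieves equality; for arbitrary $f$ of degree $d$ a comparable analysis, using Ritt's decomposition theory of polynomials to control the fine structure of $G$, is expected to yield $k\leq\tau(d)$ and hence the theorem.
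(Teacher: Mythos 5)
Your overall architecture matches the paper's: the same Cauchy--Schwarz reduction $\alpha_p(f)^{-1}\leq m_p(f)$, the same Lang--Weil plus Chebotarev analysis to show that the average of $m_p(f)$ over primes is the number $k$ of $\Q$-irreducible factors of $f(x)-f(y)$, and then the reduction to showing $k\leq\tau(d)$. The first two steps are correct (the averaging argument is essentially Theorem~\ref{thm:m-average-special} of the paper, and needs the explicit Chebotarev/Burnside bookkeeping you gesture at, including controlling the finitely many bad primes where an absolutely irreducible factor degenerates mod $p$).

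The problem is the third step, which you label ``the main obstacle'' and leave as a sketch: you propose to identify $k$ with the rank of the monodromy representation, extract a $d$-cycle from inertia at $\infty$, and then invoke Ritt decomposition theory to sharpen $d$ to $\tau(d)$. As written, this is not a proof---``a comparable analysis \dots is expected to yield $k\leq\tau(d)$'' is a statement of hope, not a derivation---and Ritt's theory (which is about expressing a polynomial as a composition) is not the relevant tool. The monodromy route can in fact be made to work, but not this way: one should observe that the full decomposition group $D_\infty\subset G$ above $\infty$ is the affine group $(\Z/d\Z)^\times\ltimes\Z/d\Z$ (inertia is the $d$-cycle, and the residue field at $\infty$ is $\Q(\zeta_d)$ because the ratios $\alpha_i/\alpha_j$ of leading Puiseux coefficients reduce to $d$th roots of unity), and that $D_\infty$ already has exactly $\tau(d)$ orbits on $\Omega\times\Omega$, indexed by divisors of $d$; since $D_\infty\subseteq G$, this forces $k\leq\tau(d)$. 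That is a genuine argument, but considerably more elaborate than what the paper does. The paper's bound is a one-line observation about leading forms: the top-degree homogeneous part of $f(x)-f(y)$ is $a(x^d-y^d)=a\prod_{e\mid d}\Phi_e^*(x,y)$, the leading form of a product of polynomials is the product of their leading forms, and each nonconstant irreducible factor of $f(x)-f(y)$ has a nonconstant leading form; hence $f(x)-f(y)$ can have at most $\tau(d)$ irreducible factors over $\Q$. You have in effect inverted the difficulty: the step you treated as hard is the paper's trivial step, and your proposal does not actually close it.
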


Note that for $d\geq 3$, we always have $\tau(d)<d$. Hence it is reasonable to conjecture that Corollary \ref{cor:gallagher} is not sharp whenever $\alpha$ is smaller than (and bounded away from) $1/2$. See the last section in \cite{Sha14} for a preliminary discussion on the simplest case $d=3$.

\begin{theorem}[Inverse sieve conjecture implies improved larger sieve]\label{thm:improved-larger}
Assume the truth of Conjecture \ref{conj:inverse}. Let $X\subset [N]$ be a subset and $\alpha\in (0,1)$. Let $\epsilon\in (0,1)$ be a parameter. If $|X\pmod p|\leq (\alpha+o(1))p$ for every prime $p$, then $|X|\ll_{\alpha,\epsilon} N^{1/d}$ where $d$ is the smallest positive integer with $\tau(d)\geq (1-\epsilon)\alpha^{-1}$.
\end{theorem}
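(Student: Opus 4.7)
The plan is to apply the inverse sieve conjecture (Conjecture \ref{conj:inverse}) to conclude that $X$ is essentially contained in the value set of a single-variable polynomial $f \in \Z[x]$, and then use Theorem \ref{thm:poly} to force the degree of $f$ to be at least $d$.

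First I would apply Conjecture \ref{conj:inverse} to $X$. Either $|X|$ is negligibly small, in which case the bound $|X|\ll_{\alpha,\epsilon} N^{1/d}$ holds trivially, or $X$ carries algebraic structure. Under the precise formulation deferred to Conjecture \ref{conj:inverse}, we expect to extract a polynomial $f(x)\in\Z[x]$ of some degree $d'\geq 1$ such that all but a negligible portion of $X$ lies in $f(\Z)\cap [N]$. If instead the conjecture outputs a higher-dimensional variety, the hypothesis $\alpha<1/2$ combined with a generic Lang--Weil type lower bound on the density of such a variety modulo $p$ rules out dimension $\geq 2$, and we can reduce to the polynomial case.

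Second, having fixed such an $f$ of degree $d'$, I would use the sieve hypothesis $|X\pmod p|\leq (\alpha+o(1))p$, together with the fact that $X$ occupies a positive proportion of $f(\Z)\cap [N]$, to deduce that $\alpha_p(f)\leq \alpha+o(1)$ on a density-one set of primes. (Here an equidistribution argument is needed: even though $X$ is only a subset of $f(\Z)\cap[N]$, its images modulo $p$ should still essentially recover $f_p(\F_p)$ for almost all primes.) Averaging and invoking Theorem \ref{thm:poly} then gives
\[ \alpha^{-1}(1-o(1))\leq \lim_{Q\to\infty} \frac{1}{\pi(Q)}\sum_{p\leq Q}\alpha_p(f)^{-1}\leq \tau(d'), \]
so that $\tau(d')\geq (1-\epsilon)\alpha^{-1}$ once $N$ is sufficiently large in terms of $\alpha$ and $\epsilon$. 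By the minimality defining $d$ in the theorem statement, we conclude $d'\geq d$.

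Third, I would bound $|X|$ by $|f(\Z)\cap [N]|$ plus the negligible exceptional set from Step 1. Since there are only $O_d(N^{1/d'})$ integers $n$ with $|f(n)|\leq N$, this gives $|X|\ll_{\alpha,\epsilon} N^{1/d'}\leq N^{1/d}$, as desired. The main obstacle will be the equidistribution input in Step 2: passing from the sieve constraint on $X$ to the corresponding constraint $\alpha_p(f)\leq \alpha+o(1)$ on the ambient polynomial, with control good enough that the loss can be absorbed into the $(1-\epsilon)$ slack. This is precisely where the parameter $\epsilon$ in the theorem statement is spent.
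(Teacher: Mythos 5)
Your Step 2 has a genuine gap that makes the proposal collapse. You want to deduce $\alpha_p(f)\leq\alpha+o(1)$ on a density-one set of primes from the hypothesis $|X\pmod p|\leq(\alpha+o(1))p$ together with $X\cap f([N])$ being a positive fraction of $X$. But the inclusion $X\cap f([N])\subset f([N])$ only transfers in the \emph{opposite} direction: it gives $|(X\cap f([N]))\pmod p|\lesssim \alpha_p(f)\,p$, i.e.\ smallness of $\alpha_p(f)$ is an explanation for smallness of $X\pmod p$, not a consequence of it. To go the way you need one must know that $X\pmod p$ covers essentially all of $f_p(\F_p)$, which would require $X$ to be dense in $f([N])$ --- but $f([N])$ has size $\asymp N^{1/d'}$, and $|X|\ll N^{1/d'}$ is precisely the conclusion you are trying to prove, so you cannot assume density. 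Nothing prevents $X\cap f([N])$ from being an extremely sparse subset of $f([N])$ whose reductions miss almost all of $f_p(\F_p)$. There is a second problem even upstream of this: a single application of Conjecture \ref{conj:inverse} only guarantees a polynomial of degree $d'\in[2,C]$ with no control on $\tau(d')$, so the degree $2$ case (giving only $|X|\ll N^{1/2}$) can occur on the first pass.

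The paper's actual argument (Proposition \ref{prop:inverse-large-deg}) is structured precisely to avoid the step you are attempting. Rather than trying to pass from $X$ back to $\alpha_p(f)$, it iterates: having built a composite $F=f_1\circ\cdots\circ f_{i-1}$ of low degree $D$ with $\tau(D)<(1-\epsilon)\alpha^{-1}$, it applies Cauchy--Schwarz to the fibers of $F\pmod p$ to show $|X_{i-1}\pmod p|\leq(\alpha+o(1))^{1/2}m_p(F^*)^{1/2}p$ for the preimage set $X_{i-1}$, and then invokes Theorem \ref{thm:m-average-special} to average $m_p(F^*)$, obtaining $\sum_{p\leq Q}|X_{i-1}\pmod p|/p\leq[(\alpha+o(1))\tau(D)]^{1/2}\pi(Q)+\ldots<(1-\epsilon/2)\pi(Q)$. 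This verifies the hypotheses of Conjecture \ref{conj:inverse} \emph{for $X_{i-1}$}, producing a fresh polynomial $f_i$ of degree $\geq 2$ and advancing the composite degree. The process halts once $\tau(d_1\cdots d_k)\geq(1-\epsilon)\alpha^{-1}$, and then $|X|\ll|[N]\cap f([N])|\ll N^{1/d}$ via Walsh's theorem. Note that the averaging input from Theorem \ref{thm:m-average-special} enters to control $m_p$ of the \emph{known} polynomial built so far, not to extract bounds on $\alpha_p$ of an unknown polynomial from sieve data on $X$ --- which is why the paper's argument is non-circular while yours is not.
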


Since $\tau(d)\leq d^{C/\log\log d}$ for some constant $C>0$, the conclusion above implies that $|X|\ll_{\alpha}N^{\alpha^{c\log\log\alpha^{-1}}}$ for some constant $c>0$, a huge improvement upon Corollary \ref{cor:gallagher} for small $\alpha$ (assuming the truth of the inverse sieve conjecture).

In the remainder of this introduction we discuss further about the quantities $\alpha_p(f)$ and $\alpha(f)$. Note that\eqref{eq:alpha-rec} becomes an equality when $f(x)=x^d$. Indeed, in this case we have $\alpha_p(f)\sim (p-1,d)^{-1}$, and thus the average of $\alpha_p(f)^{-1}$ is equal to
\[ \frac{1}{\phi(d)}\sum_{a\in (\Z/d\Z)^{\times}} (a-1,d)=\tau(d). \]
Note, however, that in this case the average of $\alpha_p(f)$ is equal to
\[ \alpha(f)=\frac{1}{\phi(d)}\sum_{a\in (\Z/d\Z)^{\times}} (a-1,d)^{-1},   \]
which can be evaluated to $\phi(d)/d$ when $d$ is squarefree (and at least $(\phi(d)/d)^2$ for any $d$). Since $\phi(d)/d\gg (\log\log d)^{-1}$, the following construction provides polynomials $f$ with smaller $\alpha(f)$.

\begin{theorem}[Polynomials with small value sets modulo primes]\label{thm:poly-good}
Define a sequence of polynomials $\{f_n\}$ by
\[ f_1(x)=x^2,\ \ f_{n+1}(x)=(f_n(x)+1)^2. \]
Then $\alpha_p(f_n)=a_n$ provided that $p>2f_{n-1}(0)+2$ when $n>1$, where the sequence $\{a_n\}$ is defined by
\[ a_1=\frac{1}{2},\ \ a_{n+1}=a_n-\frac{a_n^2}{2}. \]
Moreover, we have $a_n\leq 2n^{-1}$ for each $n$.
\end{theorem}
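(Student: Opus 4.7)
The proof has two ingredients: the recursion $\alpha_p(f_n)=a_n$ (to main order, as $p\to\infty$) and the bound $a_n\leq 2/n$.

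For the first, set $S_n:=f_n(\F_p)$ and observe that $f_{n+1}(x)=(f_n(x)+1)^2$ gives $S_{n+1}=\{(y+1)^2:y\in S_n\}$. The squaring map $y\mapsto(y+1)^2$ is two-to-one away from $y=-1$, with $(y_1+1)^2=(y_2+1)^2$ for $y_1\neq y_2$ precisely when $y_1+y_2=-2$, so
\[
|S_{n+1}|=|S_n|-\tfrac{1}{2}|R_n|,\qquad R_n:=\{y\in S_n:-y-2\in S_n,\ y\neq -1\}.
\]
A short induction yields the explicit factorization $f_n(x)+1=(x^2+1)^{2^{n-1}}$, so the condition that $y$ and $-y-2$ both lie in $S_n$ translates into the existence of $(x_1,x_2)\in\F_p^2$ with $(x_1^2+1)^{2^{n-1}}+(x_2^2+1)^{2^{n-1}}=0$. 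This affine curve factors over $\overline{\F_p}$ into $2^{n-1}$ plane conics $x_1^2+1=\zeta(x_2^2+1)$ indexed by the roots $\zeta$ of $-1$ of order dividing $2^{n-1}$. Counting $\F_p$-points on each component via Hasse--Weil, collapsing pairs $(x_1,x_2)$ to $y=f_n(x_1)$ through the generic fiber sizes of $f_n$ (controlled by the hypothesis $p>2f_{n-1}(0)+2$, which prevents collisions among the critical values of $f_{n-1}$ mod $p$), and inserting the inductive estimate on $|S_n|$, one aims for $|R_n|=a_n^2\,p+o(p)$; plugging this back into the relation above delivers $a_{n+1}=a_n-a_n^2/2$.

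The bound $a_n\leq 2/n$ is an elementary induction. Since $\phi(a):=a-a^2/2$ has derivative $1-a\geq 0$ on $[0,1]$, $\phi$ is non-decreasing there. The base case $a_1=1/2\leq 2$ is immediate, and assuming $a_n\leq 2/n\leq 1$ for $n\geq 2$ gives
\[
a_{n+1}=\phi(a_n)\leq\phi\!\left(\tfrac{2}{n}\right)=\tfrac{2(n-1)}{n^2}\leq\tfrac{2}{n+1},
\]
the last inequality being equivalent to $(n-1)(n+1)\leq n^2$.

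The principal obstacle is the count of $|R_n|$: not every conic component of the factored curve is defined over $\F_p$, and which ones are depends delicately on the residue of $p$ modulo $2^n$, so the Hasse--Weil point count must be assembled component by component. One must verify that, after summation over components and careful accounting of the degenerate contributions (those where $x_i^2+1=0$, and those over $y=-1$), the total is always of the form $a_n^2\,p+o(p)$, independently of how the roots $\zeta$ of $-1$ of order dividing $2^{n-1}$ distribute between $\F_p$ and its proper extensions.
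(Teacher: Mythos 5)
The key step in your derivation of the recursion is false. You claim the ``explicit factorization'' $f_n(x)+1=(x^2+1)^{2^{n-1}}$, but this already fails at $n=2$: since $f_2(x)=(x^2+1)^2$, one has $f_2(x)+1=x^4+2x^2+2$, whereas $(x^2+1)^{2}=x^4+2x^2+1$. The iterates $f_n=(f_{n-1}+1)^2$ telescope to a \emph{composition} of quadratics, not a power of $x^2+1$, and in fact $f_n+1$ has no such simple factorization. Consequently the plane curve you propose to count points on, $(x_1^2+1)^{2^{n-1}}+(x_2^2+1)^{2^{n-1}}=0$, is not the correct model for the collision set
\[
R_n=\{y\in S_n:\ -y-2\in S_n,\ y\neq -1\},
\]
and the rest of the Hasse--Weil accounting collapses.

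Beyond that concrete error, the difficult content is precisely the step you label the ``principal obstacle'': showing $|R_n|=a_n^2\,p+o(p)$, i.e.\ that the events $y\in S_n$ and $-y-2\in S_n$ behave independently. The paper does not use $\F_p$-point counting at all for this. It passes to the monodromy group $G_n=\Gal(\F_p(R_n)/\F_p(t))$ via Cohen's lemma, and shows inductively — using Fried's criterion on branch points, which is exactly where the hypothesis $p>2f_{n-1}(0)+2$ enters — that the subgroup $H_n$ fixing $\sqrt{t}$ is the full product $G_{n-1}\times G_{n-1}$ (and $G_n=G_n^*$). The product structure is the precise Galois-theoretic formulation of the independence you are aiming for, and it yields
\[
\alpha_p(f_n)=\tfrac12\bigl[1-(1-\alpha_p(f_{n-1}))^2\bigr]
\]
by counting fixed-point-having elements in $H_n$. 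Your direct curve-counting route would need a correct replacement for the false factorization and an argument that the count is insensitive to the congruence of $p$ mod $2^n$; as written, that is not supplied. The elementary induction you give for $a_n\le 2/n$ is correct and matches the paper.
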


Since $\deg f_n=2^n$, we have $\alpha(f_n)\ll (\log (\deg f_n))^{-1}$. We do not know whether this is the best example or whether the bound for $\alpha(f)$ in Theorem \ref{thm:poly} is sharp. See Section \ref{sec:rem-alpha} below for more discussions on this.

The investigation of $\alpha_p(f)$ for a fixed prime $p$ has a long history (see \cite{BS59,Coh70}), and explicit formulae for $\alpha_p(f)$ are known in terms of the proportion of fixed point free elements in a certain Galois group (see Lemma \ref{lem:cohen} below and the remark afterwards). Not surprisingly, the quantity $\alpha(f)$ can also be evaluated in terms of a certain Galois group, and this is recorded in Proposition \ref{prop:alpha-average} below. Due to a lack of understanding of the relevant Galois groups, our lower bound for $\alpha(f)$ is instead obtained by studying the number of solutions to $f(x)\equiv f(y)\pmod p$ on average as $p$ varies (see Section \ref{sec:statements}), and it is for this reason that the average of $\alpha_p(f)^{-1}$ naturally shows up.

A related line of work is on classifying those polynomials $f\in\F_p[x]$ for which $\alpha_p(f)$ is close to the lower bound $d^{-1}$ (for a fixed $p$). In particular, results in \cite{GM88} imply that $\alpha_p(f)\geq 2d^{-1}+o(1)$ whenever $p\not\equiv \pm 1\pmod d$. 

The rest of this paper is organized as follows. In Section \ref{sec:statements} we state a general and quantitative version of Theorem \ref{thm:poly} for polynomials over arbitrary number fields, and outline the proof strategy, with the details given in Section \ref{sec:average}. In Section \ref{sec:inverse-sieve} we state a precise form of the inverse sieve conjecture and deduce Theorem \ref{thm:improved-larger}. In Section \ref{sec:good-poly}, Theorem \ref{thm:poly-good} is proved by computing relevant Galois groups. Finally in Section \ref{sec:remarks}, we make some further remarks concerning the larger sieve as well as the quantity $\alpha(f)$.

\smallskip

\textbf{Acknowlegements.} Thanks to Brian Conrad for help with proofs, to Kannan Soundararajan for helpful discussions, and to Akshay Venkatesh for asking a question that leads to this paper.

\section{Statement of results and proof strategy}\label{sec:statements}

Let $K$ be a number field and $\mathcal{O}_K$ be its ring of integers. Denote by $\Delta_K$ the (absolute) discriminant of $K$. We will use the letter $\pp$ to denote a prime ideal in $\mathcal{O}_K$, $\kappa_{\pp}$ to denote the residue field $\mathcal{O}_K/\pp$, and $N(\pp)=|\kappa_{\pp}|$ to denote the norm of $\pp$. For a polynomial $f(x)\in \mathcal{O}_K[x]$ of degree $d$, define $f_{\pp}$ and $\alpha_{\pp}(f)$ as in the introduction. To recall, $f_{\pp}$ is the reduction of $f$ modulo $\pp$ and $\alpha_{\pp}(f)=N(\pp)^{-1}|f_{\pp}(\kappa_{\pp})|$.

To make our result quantitative, we also need a notion that measures the sizes of the coefficients of $f$. For this purpose, we define the (absolute logarithmic) height $h(f)$ of $f\in\mathcal{O}_K[x]$ to be the sum
\begin{equation}\label{eq:height}
h(f)=\sum_{v} \max_a \log |a|_v,
\end{equation}
where the sum is over all places $v$ of $K$ and the maximum is taken over all coefficients $a$ of $f$. The quantity $|a|_v$ is normalized such that it does not depend on the field $K$. For example, when $f\in\Z[x]$ is primitive, the height $h(f)$ is the logarithm of the absolute value of the largest coefficient of $f$. See \cite{HS00} for basic properties of the height function.

Instead of studying $\alpha_{\pp}(f)$ directly, we find it easier to study the related quantity $m_{\pp}(f)$, defined to be
\[ m_{\pp}(f)=N(\pp)^{-1}\cdot \#\{(x,x')\in\kappa_{\pp}\times\kappa_{\pp}:f_{\pp}(x)=f_{\pp}(x')\}. \]
By an application of Cauchy-Schwarz inequality, we have $\alpha_{\pp}(f)\geq m_{\pp}(f)^{-1}$.  Therefore Theorem \ref{thm:poly} is a consequence of the following:

\begin{theorem}[Average number of solutions modulo primes]\label{thm:m-average-special}
Let $K$ be a number field and $f(x)\in \mathcal{O}_K[x]$ be a polynomial of degree $d\geq 1$. Let $s(f)$ be the number of irreducible factors of $f(x)-f(y)$ in $K[x,y]$. Then for any $Q\geq 2$, we have
\[ \sum_{N(\pp)\leq Q}m_{\pp}(f)= s(f)\sum_{N(\pp)\leq Q}1+O(Q\exp(-c\sqrt{\log Q})+h(f)),      \]
for sufficiently small $c=c(K,d)>0$.
\end{theorem}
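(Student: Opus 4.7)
The plan is to recast $m_\pp(f)$ geometrically. Write $V \subset \mathbb{A}^2_K$ for the affine variety $\{f(x) = f(y)\}$, so that $m_\pp(f) = N(\pp)^{-1}\#V(\kappa_\pp)$, and factor $f(x)-f(y) = \prod_{i=1}^{s(f)} g_i(x,y)$ into $K$-irreducibles (one factor being $x-y$). Let $C_i = \{g_i = 0\}$ and $n_i$ be the number of absolutely irreducible components of $C_i$, which $\mathrm{Gal}(\bar K/K)$ permutes transitively. Introduce a ``bad'' prime set $S$ consisting of primes dividing the discriminants of the $g_i$ and the pairwise resultants $\mathrm{Res}(g_i,g_j)$, together with primes ramified in a splitting field of $f(x)-f(y)$; standard height--discriminant estimates give $|S| \ll_d h(f)$. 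For $\pp \notin S$, the $C_i$ reduce mod $\pp$ to distinct curves whose pairwise intersections remain zero-dimensional, so
\[
\#V(\kappa_\pp) = \sum_{i=1}^{s(f)} \#C_i(\kappa_\pp) + O_d(1).
\]

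Next apply the Lang--Weil estimate to each $C_i$. For $\pp \notin S$, Frobenius at $\pp$ permutes the $n_i$ absolutely irreducible geometric components of $g_i$; let $r_i(\pp)$ count the Frobenius-fixed components, equivalently the absolutely irreducible factors of $g_i \bmod \pp$ that are individually $\kappa_\pp$-rational. Each such component contributes $N(\pp) + O_d(N(\pp)^{1/2})$ points by Lang--Weil, while a Frobenius orbit of size $>1$ yields a $\kappa_\pp$-irreducible but not absolutely irreducible factor, contributing only $O_d(1)$ points. Setting $r(\pp) = \sum_i r_i(\pp)$, we obtain
\[
m_\pp(f) = r(\pp) + O_d\bigl(N(\pp)^{-1/2}\bigr).
\]

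To evaluate the average of $r(\pp)$, let $L/K$ be a splitting field of $f(x)-f(y)$; its degree over $K$ is bounded in terms of $d$. The Galois group $G = \mathrm{Gal}(L/K)$ acts transitively on the geometric components within each $g_i$, so by the Cauchy--Frobenius (Burnside) lemma the average of $|\mathrm{Fix}_\sigma|$ over $\sigma \in G$ equals the number of $G$-orbits, namely $1$. Summing over $i$, the average of $r$ on $G$ is exactly $s(f)$. An unconditional effective Chebotarev density theorem (Lagarias--Odlyzko style) then yields
\[
\sum_{N(\pp)\le Q} r(\pp) = s(f)\sum_{N(\pp)\le Q}1 + O\bigl(Q\exp(-c'\sqrt{\log Q})\bigr),
\]
where $c'$ depends a priori on $\log|\Delta_L|$.

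The last step is to extract $c = c(K,d)$ by absorbing the $\Delta_L$-dependence into the $h(f)$ term. Using the standard bound $\log|\Delta_L| \ll_{K,d} h(f)+1$, one combines (i) the trivial estimate $m_\pp(f) \le d$ for primes below a threshold depending on $h(f)$, (ii) the bad-prime contribution $|S|\cdot d \ll_d h(f)$, and (iii) the negligible tail $\sum_\pp N(\pp)^{-1/2} = O(Q^{1/2})$, to arrive at the claimed error $O(Q\exp(-c\sqrt{\log Q}) + h(f))$. The main obstacle is precisely this final packaging: cleanly separating the height-dependence of the splitting-field discriminant from the Chebotarev error term so that the surviving constant $c$ depends only on $K$ and $d$.
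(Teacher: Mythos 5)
Your overall skeleton matches the paper's: factor $f(x)-f(y)$ into $K$-irreducibles, remove a bad-prime set of size $\ll h(f)$, use Lang--Weil to convert $m_\pp(f)$ into a count of Frobenius-fixed geometric components, identify the average over the Galois group with $s(f)$ via Burnside's lemma, and finish with effective Chebotarev. The genuine gap is the one you flag yourself in your closing sentence, and it is not a mere packaging issue. If $L$ is a splitting field that depends on $f$, then $\log|\Delta_L|$ is typically of order $h(f)$, and the Lagarias--Odlyzko error term $O\bigl(Q\exp(-c\sqrt{\log Q})\bigr)$ is only available once $Q\geq\exp\bigl(C(\log\Delta_L)^2\bigr)\approx\exp\bigl(Ch(f)^2\bigr)$; below that threshold the trivial bound $m_\pp(f)\leq d$ produces a discrepancy of rough size $Q/\log Q$, which for intermediate $Q$ (say $Q\approx\exp(h(f))$) dominates both $Q\exp(-c\sqrt{\log Q})$ and $h(f)$, so your item (i) does not absorb it. A second difficulty buried in the same place is the potential Siegel zero $\beta_0$ of $\zeta_L$, whose contribution $Q^{\beta_0}/\beta_0$ is uncontrolled when $L$ varies with $f$.

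The paper avoids both problems with a specific structural observation about $f(x)-f(y)$: its top-degree homogeneous part is $a(x^d-y^d)$, which splits into linear factors over the cyclotomic field $K(\mu_d)$; since $x^d-y^d$ has no repeated factor, a Hilbert 90/Galois-descent argument then forces every absolutely irreducible factor of $f(x)-f(y)$ to be defined over $L=K(\mu_d)$. Because this $L$ is a \emph{fixed} field depending only on $K$ and $d$, the Chebotarev threshold, the $\log\Delta_L$ term, and the Siegel zero are all controlled in terms of $K$ and $d$ alone, and the Siegel-zero term is absorbed into the error for $Q$ above a constant depending only on $K,d$. This is the missing step; without it your argument gives the statement only for $Q\gg_f 1$, which is quantitatively weaker than what is asserted.
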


To see that this implies Theorem \ref{thm:poly}, note that $s(f)\leq\tau(d)$ (when $K=\Q$) since the homogeneous part of degree $d$ of $f(x)-f(y)$ is $a(x^d-y^d)$ for some nonzero $a$ and it factors into $\tau(d)$ irreducible factors (which are cyclotomic polynomials).

We will in fact prove the following more general result, of which Theorem \ref{thm:m-average-special} is a special case. For a multivariable polynomial $g(\un{X})\in \mathcal{O}_K[\un{X}]$ in $n$ variables of total degree $d$, define $g_{\pp}$ and $m_{\pp}(g)$ similarly as above. More precisely, $g_{\pp}$ is the reduction of $g$ modulo $\pp$ and 
\[ m_{\pp}(g)=N(\pp)^{-(n-1)}\cdot \#\{\un{X}\in\kappa_{\pp}^n: g_{\pp}(\un{X})=0\}. \]
Define the (absolute logarithmic) height $h(g)$ of $g$ as in \eqref{eq:height}. Two polynomials $g_1,g_2$ are said to be equivalent if they are scalar multiples of each other.

\begin{theorem}[Average number of solutions modulo primes; general form]\label{thm:m-average}
Let $K$ be a number field and $g(\un{X})\in \mathcal{O}_K[\un{X}]$ be a polynomial in $n$ variables of total degree $d\geq 1$. Let $s(g)$ be the number of non-equivalent irreducible factors of $g$ in $K[\un{X}]$. Let $L$ be a Galois extension of $K$ such that $g$ factors into absolutely irreducible factors in $L[\un{X}]$. Let $C=C(K,n,d)>0$ be sufficiently large. If $Q\geq \exp(C(\log \Delta_L)^2)$, then
\begin{equation}\label{eq:m-average} 
\sum_{N(\pp^m)\leq Q}m_{\pp}(g)\log N(\pp)=s(g)Q-t(g)\frac{Q^{\beta_0}}{\beta_0}+O(Q\exp(-c\sqrt{\log Q})+h(g)+\log\Delta_L),
\end{equation}
for sufficiently small $c=c(K,n,d)>0$, where $t(g)\in [0,s(g)]$, and the second term appears only if the Dedekind zeta function $\zeta_L$ has a Siegel zero $\beta_0\in (1/2,1)$. Consequently, for $Q\geq \exp(C(\log\Delta_L)^2)$ we have
\begin{equation}\label{eq:m-average2} 
\sum_{N(\pp)\leq Q}m_{\pp}(g)\leq s(g)\sum_{N(\pp)\leq Q}1+O(Q\exp(-c\sqrt{\log Q})+h(g)+\log\Delta_L).     
\end{equation}
\end{theorem}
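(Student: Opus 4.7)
The plan is to identify $N(\pp) \cdot m_\pp(g)$ with the count $\#\{\un{X}\in\kappa_\pp^n : g_\pp(\un{X})=0\}$, decompose this count according to the Frobenius-fixed geometrically irreducible components of the hypersurface $\{g=0\}$, and then average over $\pp$ using an effective Chebotarev density theorem. The conceptual heart is Burnside's lemma: the average over $\pp$ of the number of $\mathrm{Frob}_\pp$-fixed components equals the number of $\Gal(L/K)$-orbits on components, which is exactly $s(g)$.

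Write $g = c\prod_{i=1}^{s(g)} g_i$ with $g_i\in\mathcal{O}_K[\un{X}]$ irreducible over $K$, and further factor each $g_i$ as a product of absolutely irreducible $h_{i,j}\in\mathcal{O}_L[\un{X}]$; the group $G = \Gal(L/K)$ acts transitively on $\{h_{i,j}\}_j$ for each $i$. Let $S$ be the finite set of "bad" primes $\pp$ of $K$ — those dividing leading coefficients, cross-resultants $\mathrm{Res}(h_{i,j},h_{i',j'})$, the various singular-locus discriminants of the $h_{i,j}$, and the relative discriminant of $L/K$. For $\pp\notin S$, the reduction $h_{i,j}\bmod\pp$ is absolutely irreducible over a suitable extension of $\kappa_\pp$, and $h_{i,j}$ is defined over $\kappa_\pp$ exactly when $\mathrm{Frob}_\pp$ fixes it. An effective Lang–Weil bound (say in the form of Cafure–Matera, which controls error terms uniformly in degree) then yields
\[ m_\pp(g) \;=\; \#\{(i,j)\,:\,\mathrm{Frob}_\pp \text{ fixes } h_{i,j}\} \;+\; O\bigl(N(\pp)^{-1/2}\bigr) \]
for $\pp\notin S$, with an implied constant depending only on $K,n,d$. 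The heights of the relevant resultants and discriminants defining $S$ are polynomial in $h(g)$ and $\log\Delta_L$, so the total contribution of $\pp\in S$ to the left side of \eqref{eq:m-average} is $O(h(g)+\log\Delta_L)$.

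It remains to sum the indicator $\mathbf{1}[\mathrm{Frob}_\pp \text{ fixes } h_{i,j}]$ weighted by $\log N(\pp)$ over prime powers with $N(\pp^m)\leq Q$. Expanding the indicator in characters of $G$ (acting on the set of geometric components) decomposes the sum into weighted Chebotarev sums, which are evaluated via the effective Chebotarev density theorem of Lagarias–Odlyzko. The trivial character, which occurs with multiplicity equal to the number of $G$-orbits (one per $K$-irreducible factor), contributes $s(g)Q$ by the prime ideal theorem for $\zeta_L$. Nontrivial characters contribute errors of size $O(Q\exp(-c\sqrt{\log Q}))$, except that a possible Siegel zero $\beta_0$ of $\zeta_L$ produces the explicit term $-t(g)Q^{\beta_0}/\beta_0$, where $t(g)\in[0,s(g)]$ records the multiplicities of the real characters whose Artin $L$-functions vanish at $\beta_0$ (this $\beta_0$ being a zero of $\zeta_L$). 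Equation \eqref{eq:m-average2} then follows by partial summation and the trivial bound $m_\pp(g)\leq d$.

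The main obstacle, I expect, is keeping the geometric input \emph{effective} uniformly in $h(g)$ and $\Delta_L$: one needs a Lang–Weil-type error term that depends only on $n$ and $d$ (not on the coefficients), and one must track the heights and number of the exceptional primes in $S$ carefully enough to absorb them into the stated error. A secondary subtlety is that the hypothesis $Q\geq\exp(C(\log\Delta_L)^2)$ is exactly the threshold at which Lagarias–Odlyzko supplies nontrivial savings and below which the Siegel zero term could otherwise swallow the main term; the argument must be arranged so that this single potential exceptional character is the only obstruction.
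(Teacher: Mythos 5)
Your proposal follows essentially the same route as the paper: factor $g$ into absolutely irreducible factors over the Galois extension $L$, use Lang--Weil to identify $m_\pp(g)$ with the number of Frobenius-fixed geometric components up to $O(N(\pp)^{-1/2})$, control the bad primes by height considerations, and average via effective Chebotarev (Lagarias--Odlyzko) together with Burnside's lemma to extract the main term $s(g)$ and the Siegel-zero correction $t(g)$. The only organizational difference is that you expand the permutation character in irreducible characters of $\Gal(L/K)$ while the paper works with conjugacy-class sums $\psi_C(Q)$ directly and invokes Burnside at the end (these are dual formulations), and you control the exceptional set $S$ via resultants and discriminants whereas the paper invokes Noether's irreducibility forms plus Gelfond's height inequality; both devices achieve the same height bound $O(h(g)+\log\Delta_L)$ on the bad contribution.

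One point you asserted without argument that the paper does justify: the bound $t(g)\in[0,s(g)]$. The paper writes $t(g)=\frac{1}{|G|}\sum_{i}\sum_{\xi\in G_i}\chi_0(\xi)$, where $G_i$ is the stabilizer of the component $(g_i)$ and $\chi_0$ is the real one-dimensional Siegel character; each inner sum is $0$ or $|G_i|$, and summing $|G_i|/|G|$ over a $G$-orbit gives $1$, so the total is at most the number of orbits $s(g)$. Your phrase ``records the multiplicities of the real characters'' gestures at this (indeed $t(g)=\langle s,\chi_0\rangle$ and each transitive summand of the permutation representation contains $\chi_0$ with multiplicity at most $1$), but you should make this step explicit since it is what keeps the Siegel term from swallowing the main term.
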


The bounds for the error terms stem from a quantitative version of Chebotarev density theorem in \cite{LO77}. Assuming the truth of the Generalized Riemann Hypothesis (GRH) for $\zeta_L$, we can get a much better error term $O(Q^{1/2}(\log \Delta_L+[L:Q]\log Q))$, and of course without the Siegel zero term. The unconditional error term, however, is already enough for our application. 


\begin{proof}[Proof of Theorem \ref{thm:m-average-special} assuming Theorem \ref{thm:m-average}]
We show that $f(x)-f(y)$ factors into absolutely irreducible factors over $L=K(\mu_d)$, where $\mu_d$ is the group of $d$th roots of unity. Indeed, since the homogeneous part of degree $d$ of $f(x)-f(y)$ is $a(x^d-y^d)$ for some nonzero $a\in K$, it factors over $L$ into linear factors. Thus there is a factorization
\[ f(x)-f(y)=\prod_{i=1}^r g_i(x,y) \]
of $f(x)-f(y)$ into absolutely irreducible factors $g_1,g_2,\cdots,g_r$, such that the top degree part of each $g_i$ is defined over $L$. We claim that each $g_i$ is defined over $L$ as well. Suppose not. Without loss of generality, assume that some coefficient of $g_1$ does not lie in $L$. Let $\tau\in\Gal(\overline{Q}/L)$ be an automorphism that moves this coefficient. Let $\tau(g_1)$ be the polynomial obtained by applying $\tau$ to every coefficient of $g_1$. Then $\tau(g_1)$ is also a factor of $f(x)-f(y)$, and thus $\tau(g_1)$ is equivalent to $g_i$ for some $1\leq i\leq r$. By our choice of $\tau$, $\tau(g_1)$ must be equivalent to $g_i$ for some $i>1$, and thus $g_1$ and $g_i$ have equivalent top degree part. This contradicts the fact that $x^d-y^d$ has no repeated factors.

Now that the potential Siegel zero $\beta_0$ of $\zeta_L$ depends only on $K$ and $d$, the Siegel zero term in \eqref{eq:m-average} can be absorbed into the error term, and the conclusion follows easily from partial summation.
\end{proof}

\begin{remark}
In the argument above we used the fact that polynomials of the form $f(x)-f(y)\in K[x,y]$ factors into absolutely irreducible factors in $L[x,y]$ with $L=K(\mu_d)$. For a general polynomial $g(\un{X})\in K[\un{X}]$ of height $h(g)$, it can be shown that one can take $L$ with $[L:\Q]\leq C$ and $\Delta_L\leq C\exp(Ch(g))$ for some constant $C=C(K,n,d)>0$. Thus the $\log\Delta_L$ factor in the error term can be removed. We will, however, not need this relation between the size of $L$ and the height $h(g)$. 
\end{remark}

\begin{remark}
The arguments in proving Theorem \ref{thm:m-average} can be generalized to study the average behavior of $|V(\F_p)|$ as $p$ varies, for any algebraic variety $V$ defined over $\Z$. More precisely, let $m=\dim V$. Then the average of $p^{-m}|V(\F_p)|$ as $p$ varies is equal to the number of irreducible components of $V$.
\end{remark}

To finish this section, we sketch the proof of Theorem \ref{thm:m-average}. By Lang-Weil, $m_{\pp}(g)$ is essentially the number of absolutely irreducible factors of $g_{\pp}$. Factor $g$ into absolutely irreducible factors in $L[\un{X}]$, and consider the natural action of the Galois group $G=\Gal(L/K)$ on these factors. For almost all primes $\PP\subset\mathcal{O}_L$, these absolutely irreducible factors remain absolutely irreducible modulo $\PP$, and thus $m_{\pp}(g)$ is essentially the number of these factors which are defined over $\kappa_\pp$. This is equal to the number of fixed points of the Frobenius element associated with $\PP$. By Chebotarev density theorem, these Frobenius elements are equidistributed in $G$ as $\PP$ varies. Hence the average of $m_{\pp}(g)$ is equal to the average number of fixed points of the $G$-action. By Burnside's lemma, this is equal to the number of $G$-orbits, which is exactly the number of irreducible factors $s(g)$ of $g$. In carrying out this procedure some additional efforts are needed to keep track of the explicit dependence on the height of $g$.

\section{Proof of Theorem \ref{thm:m-average}}\label{sec:average}

In this section we prove Theorem \ref{thm:m-average}. The implied constants appearing in this section are always allowed to depend on $K,n,d$.

Factor $(g)$ into principle prime ideals in $L[\un{X}]$:
\[ (g)=(g_1)^{e_1}(g_2)^{e_2}\cdots (g_r)^{e_r}, \]
where $g_i\in L[\un{X}]$ is absolutely irreducible, and $g_i,g_j$ are not equivalent when $i\neq j$. Let $G$ be the Galois group $\Gal(L/K)$. For any $1\leq i\leq r$ and any $\xi\in G$, let $\xi(g_i)$ be the polynomial obtained by applying $\xi$ to all coefficients of $g_i$. Since $\xi(g_i)$ is also a factor of $g$, $\xi(g_i)$ is equivalent to $g_j$ for some $1\leq j\leq r$. Hence $\xi$ acts on $\{(g_1),\cdots,(g_r)\}$ by sending $(g_i)$ to $(\xi(g_i))$. In this way we obtain a $G$-action on $\{(g_1),\cdots,(g_r)\}$.

\begin{lemma}[Galois descent]\label{lem:galois-descent}
Let $E$ be any field and $F$ be a Galois extension of $E$. Let $h\in F[\un{X}]$ be a polynomial. The following two statements are equivalent:
\begin{enumerate}
\item the ideal $(h)\subset F[\un{X}]$ is fixed by every element of $G=\Gal(F/E)$;
\item the ideal $(h)$ is defined over $E$. In other words, there exists a scalar $\alpha\in F^{\times}$ such that $\alpha h\in E[\un{X}]$.
\end{enumerate}
\end{lemma}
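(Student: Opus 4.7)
The plan is to reduce the equivalence to Hilbert's Theorem~90 applied to the multiplier cocycle of the $G$-stable ideal $(h)$. The direction (2)$\Rightarrow$(1) is essentially immediate: if $\alpha h \in E[\un{X}]$ for some $\alpha \in F^{\times}$, then for every $\xi \in G$ one gets $\xi(h) = \xi(\alpha)^{-1}\alpha\, h$, so $\xi(h)$ is a nonzero scalar multiple of $h$ and the two ideals coincide.

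For the substantive direction (1)$\Rightarrow$(2), I would assume $h \neq 0$ (the zero case being trivial) and exploit the fact that $F[\un{X}]$ is a UFD with unit group $F^{\times}$ together with the observation that each $\xi \in G$ acts by a degree-preserving ring automorphism. These give, for each $\xi \in G$, a unique scalar $c(\xi) \in F^{\times}$ satisfying $\xi(h) = c(\xi)\,h$. The key step is to verify that the resulting map $c \colon G \to F^{\times}$ is a 1-cocycle, i.e.\ $c(\eta\xi) = c(\eta)\cdot\eta(c(\xi))$; this falls out by applying $\eta$ to the defining relation $\xi(h) = c(\xi)h$ and comparing with the defining relation for $\eta\xi$.

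With the cocycle in hand, I would invoke Hilbert's Theorem~90 to produce $\beta \in F^{\times}$ with $c(\xi) = \xi(\beta)/\beta$ for every $\xi \in G$, and then verify directly that $\alpha \assign \beta^{-1}$ does the job: a one-line computation gives $\xi(\alpha h) = \alpha h$ for all $\xi$, so $\alpha h$ is $G$-fixed and hence lies in $E[\un{X}]$. If $F/E$ happens to be infinite, the first move is to observe that the finitely many coefficients of $h$ generate a finite Galois subextension $F'/E$ through which $c$ necessarily factors, so that the classical (finite) form of Hilbert~90 suffices.

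The only real thing to watch is the cocycle identity itself --- keeping the order of $\eta,\xi$ straight and putting the twist by $\eta$ in the right place --- but this is the standard multiplier-cocycle computation for a Galois-stable principal ideal, and beyond that the lemma is really just packaging Hilbert~90 in a geometric guise.
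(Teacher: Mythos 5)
Your proof is correct and follows essentially the same route as the paper: extract the multiplier cocycle $c(\xi)$ from the $G$-stability of $(h)$, invoke Hilbert's Theorem 90 to split it, and rescale $h$ accordingly. The only addition is your careful remark about reducing to a finite subextension in the infinite Galois case, which the paper leaves implicit.
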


\begin{proof}
This is a standard result in the theory of Galois descent. For completeness, we give a proof here. Clearly (2) implies (1). Now assume that (1) holds, so that for each $\xi\in G$, we have $\xi(h)=c_{\xi}h$ for some $c_{\xi}\in F^{\times}$. The scalars $\{c_{\xi}:\xi\in G\}$ form a $1$-cocycle $G\rightarrow F^{\times}$, and thus by Hilbert's theorem 90, we have $c_{\xi}=\alpha/\xi(\alpha)$ for some $\alpha\in F^{\times}$. Now that $\xi(h)=\alpha h/\xi(\alpha)$, we conclude that $\xi(\alpha h)=\alpha h$ for each $\xi\in G$. Thus $\alpha h\in E[\un{X}]$, as desired.
\end{proof}

\begin{lemma}\label{lem:orbit}
Let the notations be as above. The number of orbits of the $G$-action on $\{(g_1),(g_2),\cdots,(g_r)\}$ is equal to $s(g)$.
\end{lemma}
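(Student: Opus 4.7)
The plan is to match the $G$-orbit decomposition of $\{(g_1),\ldots,(g_r)\}$ with the factorization of $g$ into pairwise non-equivalent $K$-irreducibles. Write $g = c\prod_{j=1}^{s} h_j^{m_j}$ in $K[\un{X}]$ with $c \in K^{\times}$ and $h_1,\ldots,h_s$ pairwise non-equivalent irreducibles in $K[\un{X}]$, so $s = s(g)$. Since $L[\un{X}]$ is a unique factorization domain, each $h_j$ factors further in $L[\un{X}]$, and comparing with $(g) = (g_1)^{e_1}\cdots(g_r)^{e_r}$ produces a partition $\{1,\ldots,r\} = S_1 \sqcup \cdots \sqcup S_s$ together with positive integers $a_{j,i}$ for $i \in S_j$ satisfying $(h_j) = \prod_{i \in S_j}(g_i)^{a_{j,i}}$.

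Next, since $h_j \in K[\un{X}]$, every $\xi\in G$ fixes the ideal $(h_j)$, so it permutes the prime divisors $\{(g_i): i \in S_j\}$ and preserves their multiplicities by uniqueness of factorization. Consequently each $G$-orbit on $\{(g_1),\ldots,(g_r)\}$ is contained in some $S_j$, and the total number of orbits is at least $s$. It remains to show that each $S_j$ is a single $G$-orbit.

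To prove this, I would suppose $S_j = O_1 \sqcup \cdots \sqcup O_\ell$ is the orbit decomposition. On each $O_\alpha$ the multiplicity $a_{j,i}$ is a constant $a_\alpha$, and the ideal $I_\alpha := \prod_{i\in O_\alpha}(g_i)^{a_\alpha}$ is $G$-invariant. Lemma \ref{lem:galois-descent} then gives $I_\alpha = (u_\alpha)$ for some $u_\alpha \in K[\un{X}]$. Multiplying the relations yields $(h_j) = \prod_\alpha (u_\alpha)$ in $L[\un{X}]$, so $h_j$ and $\prod_\alpha u_\alpha$ differ by an element of $L^{\times}$ which must lie in $K^{\times}$ upon comparing leading coefficients. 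Thus $h_j = c' \prod_\alpha u_\alpha$ in $K[\un{X}]$. Since $h_j$ is irreducible in $K[\un{X}]$, all but one of the $u_\alpha$ must be units, forcing the corresponding $O_\alpha$ to be empty. Hence $\ell = 1$ and the total number of $G$-orbits is exactly $s(g)$.

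The main thing to watch is the bookkeeping with multiplicities and scalars: verifying that $a_{j,i}$ really is constant on each $G$-orbit (which follows from unique factorization in $L[\un{X}]$ together with $\xi(h_j) = h_j$), and ensuring that the descent step produces a genuine factorization of $h_j$ in $K[\un{X}]$ rather than merely one up to ambiguity in $L^{\times}$. Neither point is deep, but they are the places where one has to be precise about the passage between ideals and polynomials.
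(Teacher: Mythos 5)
Your proof is correct and follows essentially the same route as the paper's: both directions hinge on Lemma \ref{lem:galois-descent} together with the $K$-irreducibility of the $h_j$. The only cosmetic difference is organization — the paper constructs a bijection (orbit $\mapsto$ product of its factors, shown prime by descent; $h_j \mapsto$ its set of $L$-prime divisors, shown to be one orbit), while you partition $\{1,\ldots,r\}$ by which $h_j$ divides $g_i$ and then prove each block is a single orbit. You also track the multiplicities $a_{j,i}$ explicitly; the paper silently takes squarefree products of the $g_i$, which is justified since $L/K$ is separable (so each $h_j$ factors squarefreely over $L$), but your bookkeeping would work even without that fact.
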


\begin{proof}
Let $\mathcal{H}=\{h_1,h_2,\cdots,h_s\}$ be the set of non-equivalent irreducible factors of $g$ (well defined up to scalars in $K$), where $s=s(g)$. We construct a bijection between the set of orbits and $\mathcal{H}$. 

Let $\mathcal{O}\subset \{(g_1),(g_2),\cdots,(g_r)\}$ be a $G$-orbit, and let $h$ be the product of those $g_i$ with $(g_i)\in\mathcal{O}$. We claim that $(h)$ is defined over $K$, and moreover $(h)$ is a prime ideal in $K[\un{X}]$ (hence $(h)=(h_j)$ for some $1\leq j\leq s$). In fact, since any $\xi\in G$ permutes the factors in $\mathcal{O}$, the ideal $(h)$ is fixed by $\xi$. By Lemma \ref{lem:galois-descent}, the ideal $(h)$ is defined over $K$. Now let $h'\in K[\un{X}]$ be a factor of $h$ (with positive degree), and let $\mathcal{O}'\subset\mathcal{O}$ be the set of those $(g_i)\in\mathcal{O}$ dividing $h'$. For any $(g_i)\in\mathcal{O}'$ and any $\xi\in G$, $\xi(g_i)$ is also a factor of $h'$ and thus $(\xi(g_i))\in\mathcal{O}'$. This shows that $G$ preserves $\mathcal{O}'$, and thus $\mathcal{O}'=\mathcal{O}$ and $(h')=(h)$. Hence $(h)$ is a prime ideal.

Conversely, let $h_j\in\mathcal{H}$ be an irreducible factor of $g$, and let $\mathcal{O}$ be the set of those $(g_i)$ dividing $h_j$. We claim that $\mathcal{O}$ is a $G$-orbit, and moreover the product of those ideals in $\mathcal{O}$ is equal to $(h_j)$. In fact, for any $\xi\in G$ and $(g_i)\in \mathcal{O}$, the polynomial $\xi(g_i)$ is also a factor of $h_j$. Hence $G$ preserves $\mathcal{O}$. If $\mathcal{O}'\subset\mathcal{O}$ is a $G$-orbit, the argument above shows that the product of the ideals in $\mathcal{O}'$ is defined over $K$. Hence $\mathcal{O}'=\mathcal{O}$ by the irreducibility of $h_j$. Finally, the argument above also shows that the product of the ideals in $\mathcal{O}$ is defined over $K$, and is thus equal to $(h_j)$.
\end{proof}

The following lemma shows that the heights of the factors $g_i$ are controlled by the height of $g$. Note that the height $h(g_i)$ depends only on the ideal $(g_i)$ since two equivalent polynomials have the same height.

\begin{lemma}[Gelfond's inequality]\label{lem:gelfond}
Let the notations be as above. Then $h(g_i)\leq h(g)+C$ for some constant $C=C(K,n,d)>0$.
\end{lemma}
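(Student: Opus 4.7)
The plan is to reduce this statement to the classical Gelfond inequality on heights of polynomial factors over $\overline{\Q}$, which asserts that for any factorization $g=\prod_i g_i^{e_i}$ of a polynomial in $\overline{\Q}[\un{X}]$ of total degree $d$ in $n$ variables, one has
\[ \sum_i e_i\, h(g_i)\leq h(g)+C(n,d). \]
Since each $h(g_i)\geq 0$ and each $e_i\geq 1$, this aggregate bound immediately yields $h(g_i)\leq h(g)+C$ for every individual factor, which is stronger than what the lemma claims. So the real task is to prove the Gelfond inequality above; the factors $g_i\in L[\un{X}]\subset\overline{\Q}[\un{X}]$ are covered automatically, and the constant may end up depending on $K$ only through the normalization of the absolute height.

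To prove the Gelfond inequality I would decompose the height place by place via $h(g)=\sum_v \log\|g\|_v$, where $\|g\|_v=\max_a |a|_v$ is the Gauss (sup-)norm at the place $v$ of a number field containing all coefficients. Since each $h(g_i)$ depends only on the ideal $(g_i)$, I may rescale each factor in $\overline{\Q}^{\times}$; the product formula then cancels the contribution of these scalars in the sum over $v$, so the bookkeeping is clean. At every non-archimedean place $v$, the Gauss norm is multiplicative on polynomials, so the identity $\log\|g\|_v=\sum_i e_i\log\|g_i\|_v$ holds with equality and no loss whatsoever.

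The only nontrivial contribution comes from the archimedean places. There the sup-norm is not multiplicative, but the $v$-adic Mahler measure $M_v$ is, yielding $\log M_v(g)=\sum_i e_i\log M_v(g_i)$. A standard comparison estimate, obtained from Jensen's formula applied one variable at a time together with the multinomial expansion, gives the two-sided bound
\[ M_v(h)\leq \|h\|_v \leq \binom{n+d}{n} M_v(h) \]
for every polynomial $h$ in $n$ variables of total degree at most $d$. Summing the resulting $O(1)$ loss over the finitely many archimedean places produces the additive constant $C(n,d)$, and combining with the exact non-archimedean identity finishes the proof.

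I expect the archimedean comparison between $\|\cdot\|_v$ and $M_v$ in several variables to be the only real ingredient requiring care, and it is entirely classical; in fact one can simply cite it from the treatment of heights of polynomials in \cite{HS00}. Everything else is bookkeeping across places and an application of the product formula to handle the scalar ambiguity of the factors $g_i$.
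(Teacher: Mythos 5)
Your proposal is essentially the standard textbook proof of Gelfond's inequality, and it reaches the correct conclusion. The paper itself simply cites Proposition B.7.3 of \cite{HS00} (which is exactly this statement), so there is no genuinely different route here: you are reproving the cited result rather than invoking it, and you even note at the end that one could cite instead. The structure of your argument --- decompose the height over places, use Gauss's lemma for exact multiplicativity at non-archimedean places, switch to the multiplicative $v$-adic Mahler measure at archimedean places, absorb the resulting $O_{n,d}(1)$ loss into the additive constant, and dispose of the scalar ambiguity via the product formula --- is precisely the proof behind B.7.3.

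One small inaccuracy: the stated two-sided comparison $M_v(h)\leq \|h\|_v \leq \binom{n+d}{n}M_v(h)$ is not quite right on the left. With $\|\cdot\|_v$ the sup-norm on coefficients (as in the paper's definition of $h(f)$), one does \emph{not} have $M_v(h)\leq \|h\|_v$ in general; for instance $h(z)=z^2+z-1$ has $M(h)=(1+\sqrt{5})/2>1=\|h\|_\infty$. What one actually has is a two-sided comparison up to constants depending on $n$ and $d$ only, e.g.\ $\|h\|_v \ll_{n,d} M_v(h) \ll_{n,d} \|h\|_v$ (the first from expanding in terms of elementary symmetric functions one variable at a time, the second via $M_v \leq \|\cdot\|_{v,1} \leq \binom{n+d}{n}\|\cdot\|_v$). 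This is all that your argument needs, so the conclusion is unaffected; I flag it only because the exact inequality you wrote would not pass a careful referee. You might also remark that with the absolute logarithmic height normalization the constant does not actually depend on $K$, only on $n$ and $d$, which is (harmlessly) stronger than what the lemma asserts.
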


\begin{proof}
See Proposition B.7.3 in \cite{HS00}.
\end{proof}

Let $\pp$ be a prime in $\mathcal{O}_K$ and $\PP$ be a prime in $\mathcal{O}_L$ lying above $\pp$. For each $1\leq i\leq r$, let $(g_i)\pmod\PP$ be the ideal in $\kappa_{\PP}[\un{X}]$ obtained by reduction modulo $\PP$. The following lemma will be used to ensure that $(g_i)\pmod\PP$ remains absolutely irreducible for all but finitely many $\PP$.

\begin{lemma}[Noether]\label{lem:noether}
Let $n,d$ be positive integers. There exist polynomials $\ell_1,\cdots,\ell_m$ with integral coefficients depending only on $n$ and $d$ in variables $A_{i_1\cdots i_n}$ ($i_1+\cdots+i_n\leq d$), such that the following statement holds. For any algebraically closed field $\overline{F}$, a polynomial $f\in\overline{F}[\un{X}]$ in $n$ variables of total degree at most $d$ with
\[ f(x_1,\cdots,x_n)=\sum_{i_1+\cdots+i_n\leq d}a_{i_1\cdots i_n}x_1^{i_1}\cdots x_n^{i_n} \]
is reducible over $\overline{F}$ or has total degree less than $d$ if and only if $\ell_j((a_{i_1\cdots i_n}))=0$ for each $1\leq j\leq m$.
\end{lemma}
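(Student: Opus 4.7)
The plan is to reduce to the case of homogeneous polynomials via homogenization, and then to apply projective elimination over $\Z$.

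First, I would introduce an auxiliary variable $x_0$ and form the homogenization
\[
\widetilde{f}(x_0,x_1,\ldots,x_n)\assign \sum_{i_1+\cdots+i_n\leq d} a_{i_1\cdots i_n}\,x_0^{\,d-i_1-\cdots-i_n}\,x_1^{i_1}\cdots x_n^{i_n},
\]
a polynomial homogeneous of degree $d$ in $n+1$ variables whose coefficients are literally the $a_{i_1\cdots i_n}$. A short case analysis, separating $\deg f=d$, $\deg f<d$, and $f=0$, shows that $f$ is reducible over $\ov{F}$ or has total degree strictly less than $d$ if and only if $\widetilde{f}$ admits a factorization into two homogeneous polynomials of positive degree (or vanishes); the ``degree $<d$'' case corresponds precisely to $x_0\mid\widetilde{f}$. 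It therefore suffices to exhibit polynomials with integer coefficients (depending only on $n$ and $d$) in the coefficients of a homogeneous polynomial that cut out the reducible locus in the parameter space of degree-$d$ forms in $n+1$ variables, uniformly in the algebraically closed field.

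To produce such polynomials, I would, for each pair $(d_1,d_2)$ of positive integers with $d_1+d_2=d$, consider the multiplication morphism of $\Z$-schemes
\[
\mu_{d_1,d_2}\colon \mathbb{P}^{M_1-1}_{\Z}\times_{\Z}\mathbb{P}^{M_2-1}_{\Z}\longrightarrow \mathbb{P}^{M-1}_{\Z},\qquad ([g],[h])\mapsto [gh],
\]
where $M_i=\binom{n+d_i}{n}$ and $M=\binom{n+d}{n}$. This map is given by universal polynomial formulas with integer coefficients, and its source is proper over $\Z$, so the morphism itself is proper. Its image is therefore a closed subscheme of $\mathbb{P}^{M-1}_{\Z}$ defined by finitely many homogeneous polynomials with integer coefficients. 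Taking the union of images over the finitely many valid pairs $(d_1,d_2)$ yields the reducible locus, still a closed subscheme defined over $\Z$. Since formation of this closed image commutes with base change $\Z\to\ov{F}$, the same defining polynomials $\ell_j$ work over every algebraically closed field, giving the lemma.

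The main obstacle is ensuring that the $\ell_j$ have integer coefficients and are genuinely independent of $\ov{F}$; this is precisely what performing the elimination over $\operatorname{Spec}\Z$ and invoking universal closedness of projective morphisms accomplishes. An alternative, more explicit route would be to write the $\ell_j$ down via iterated resultants (or Chow forms of the images), but for the present application only their existence is needed.
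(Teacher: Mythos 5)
The paper itself does not prove this lemma; it cites Theorem 2A of Schmidt's \emph{Equations over Finite Fields: An Elementary Approach}, where the result (originally due to E.~Noether) is established by explicit elimination with resultants. Your argument is a correct modern reformulation of the same elimination idea: homogenizing reduces the question to cutting out the locus of reducible degree-$d$ forms in $n+1$ variables, and properness of the multiplication maps $\mathbb{P}^{M_1-1}_{\Z}\times_{\Z}\mathbb{P}^{M_2-1}_{\Z}\to\mathbb{P}^{M-1}_{\Z}$ produces a closed image defined over $\Z$ whose formation commutes with passage to any algebraically closed field. Schmidt's route is more elementary and effective (it gives explicit control on the degrees and heights of the $\ell_j$, which can matter quantitatively, though not for the present application), while yours obtains the existence statement more cleanly at the cost of invoking properness and base change for closed images.

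One small gap: the homogenization reduction as you state it fails when $d=1$. A nonzero constant $f=c$ has $\deg f<1$ and so lies in the bad locus, but $\widetilde f=cx_0$ does not factor into two forms of positive degree; correspondingly there are no pairs $(d_1,d_2)$ with $d_1,d_2\geq 1$ summing to $1$, so your construction would cut out only $\{f=0\}$ rather than the full locus of constant polynomials. This is trivially repaired by treating $d=1$ separately (take the $\ell_j$ to be the coefficients of $x_1,\ldots,x_n$), but since the lemma is asserted for all $d\geq 1$ it is worth flagging.
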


\begin{proof}
See Theorem 2A in \cite{Sch76}. 
\end{proof}

\begin{lemma}\label{lem:abs-irred}
Let the notations be as above. There exists a positive integer $E\leq C\exp(Ch(g))$ for some $C=C(K,n,d)>0$, such that $(g_i)\pmod\PP$ is absolutely irreducible for each $1\leq i\leq r$ whenever $\PP\nmid E$.
\end{lemma}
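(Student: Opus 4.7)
The plan is to combine Noether's criterion (Lemma \ref{lem:noether}) with Gelfond's height bound (Lemma \ref{lem:gelfond}), associating to each factor $g_i$ a single nonzero algebraic integer whose non-vanishing modulo $\PP$ guarantees absolute irreducibility of $(g_i)\bmod\PP$.

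Fix $i$ and set $d_i=\deg g_i$. First I will apply Lemma \ref{lem:noether} with parameters $n,d_i$ to obtain polynomials $\ell_1,\ldots,\ell_m\in\Z[A_\alpha]$, homogeneous of degrees bounded in terms of $n,d$ and with bounded integer coefficients, such that a polynomial of total degree at most $d_i$ over an algebraically closed field is absolutely irreducible of degree exactly $d_i$ if and only if some $\ell_j$ is nonzero on its coefficient vector. Since $g_i$ is absolutely irreducible of degree $d_i$ over $\overline{L}$, some $\ell_{j_i}$ yields $\alpha_i:=\ell_{j_i}(\mathrm{coeffs}(g_i))\in L^\times$; I also pick a nonzero coefficient $\beta_i$ of the top-degree homogeneous part of $g_i$. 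Next I rescale $g_i$ to a representative $\tilde g_i\in\mathcal{O}_L[\un{X}]$ of the same ideal $(g_i)$; since non-vanishing of $\alpha_i$ and $\beta_i$ is scale-invariant, one still has $\alpha_i,\beta_i\in\mathcal{O}_L\setminus\{0\}$. For any prime $\PP$ of $\mathcal{O}_L$ with $\PP\nmid\alpha_i\beta_i$, the $\PP$-unit-ness of the homogeneous value $\alpha_i$ forces $\tilde g_i$ itself to be $\PP$-primitive (so $\tilde g_i\bmod\PP$ faithfully represents $(g_i)\bmod\PP$), the $\PP$-unit-ness of $\beta_i$ keeps the reduced degree equal to $d_i$, and $\ell_{j_i}$ remains nonzero on the reduced coefficients; then Lemma \ref{lem:noether} applied over $\overline{\kappa_{\PP}}$ forces the reduction to be absolutely irreducible. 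Taking
\[ E=\prod_{i=1}^{r}\bigl|N_{L/\Q}(\alpha_i\beta_i)\bigr| \]
produces a positive integer such that every $\PP\nmid E$ gives $(g_i)\bmod\PP$ absolutely irreducible for all $i$.

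To bound $E$, Lemma \ref{lem:gelfond} gives $h(\tilde g_i)\leq h(g)+O(1)$, and because $\alpha_i,\beta_i$ arise by evaluating polynomials of bounded degree with bounded integer coefficients on the coefficient vector of $\tilde g_i$, standard height functoriality yields $h(\alpha_i),h(\beta_i)\ll h(g)+1$. Combining with the inequality $\log|N_{L/\Q}(\alpha)|\leq[L{:}\Q]\,h(\alpha)$ for $\alpha\in\mathcal{O}_L\setminus\{0\}$ and absorbing $[L{:}\Q]$ and $r\leq d$ into the constant gives $\log E\ll h(g)+1$. The main bookkeeping difficulty is the passage from the polynomial $\tilde g_i$ to the ideal $(g_i)$: the two possible failure modes for absolute irreducibility of the reduced ideal---loss of $\PP$-primitivity and drop in degree---must both be encoded in a single explicit quantity, which is precisely why both the Noether value $\alpha_i$ and the leading coefficient $\beta_i$ appear in the definition of $E$.
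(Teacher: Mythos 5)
Your proposal matches the paper's proof in essence: both fix $i$, invoke Lemma \ref{lem:noether} to pick a Noether form $\ell_{j_i}$ whose value at the coefficient vector of $g_i$ is nonzero, bound the height of that value via Lemma \ref{lem:gelfond}, and let $E$ be (a bound on the norm of) that value. The only substantive differences are cosmetic bookkeeping: the paper normalizes one coefficient of $g_i$ to equal $1$ where you rescale into $\mathcal{O}_L[\un X]$ and track $\PP$-primitivity separately, and you add a leading coefficient $\beta_i$ to guard against degree dropping, which is redundant because the Noether criterion already encodes the "or has total degree less than $d$" alternative. One small caution: you assert the $\ell_j$ are homogeneous, which Lemma \ref{lem:noether} as stated does not guarantee; fortunately the step where you use it (all coefficients in $\PP$ forces $\alpha_i\in\PP$) goes through anyway because $\ell_j(0)=0$ (the zero polynomial has degree $<d$), so no harm is done.
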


\begin{proof}
It suffices to prove the statement for each individual $i$. Let $\ell_1,\cdots,\ell_m$ be the polynomials in Lemma \ref{lem:noether} corresponding to the degree of $g_i$. After normalizing we may assume that some coefficient of $g_i$ is equal to $1$. Thus $h(a)\leq h(g_i)$ for every coefficient $a$ of $g_i$, where $h(a)$ for $a\in L^{\times}$ is defined by
\[ h(a)=\sum_v \max(\log |a|_v,0). \]
Since $g_i$ is absolutely irreducible, $\ell_j$ does not vanish at the coefficient vector of $g_i$ for some $1\leq j\leq m$; call this non-vanishing value $A\in L\setminus\{0\}$. Since all coefficients of $g_i$ have heights bounded by $h(g_i)$, we have $h(A)=O(h(g_i)+1)=O(h(g)+1)$. Therefore, there exists a positive integer $E\leq C\exp(Ch(g))$ such that $A\pmod \PP$ is nonzero whenever $\PP\nmid E$. For these $\PP$, the absolute irreducibility of $g_i\pmod\PP$ follows from another application of Lemma \ref{lem:noether}.
\end{proof}

\begin{remark}
Brian Conrad pointed out that (the qualitative version of) this is a special case of a general result in algebraic geometry: if $R$ is a domain with fraction field $F$ and $S$ is a domain finitely generated over $R$ such that the $F$-algebra $S_F=F\otimes_R S$ is absolutely irreducible over $F$, then there is a non-empty open subset $U\subset\text{Spec}(R)$ such that the fiber algebra $S_u=k(u)\otimes_R S$ over $k(u)$, the residue field at $u$, is absolutely irreducible. 
\end{remark}

Let $E$ be the positive integer from Lemma \ref{lem:abs-irred}. After enlarging $E$ if necessary (but still with $E\leq C\exp(Ch(g))$), we may assume that $g_1\pmod\PP,\cdots,g_r\pmod\PP$ are all non-equivalent to each other whenever $\PP\nmid E$.

Let $\pp\nmid E$ be a prime in $\mathcal{O}_K$ and $\PP$ be a prime in $\mathcal{O}_L$ lying above $\pp$. The decomposition group $G_{\PP}=\Gal(\kappa_{\PP}/\kappa_{\pp})$ acts on the factors $\{g_1\pmod\PP,\cdots,g_r\pmod\PP\}$ such that $\xi(g_i\pmod\PP)$ is equivalent to $g_j\pmod\PP$ for any $\xi\in G_{\PP}$. Via the natural inclusion $G_{\PP}\hookrightarrow G$, this action is compatible with the $G$-action on $\{g_1,\cdots,g_r\}$.

For any conjugacy class $[\xi]\subset G$, let $s([\xi])$ be the number of fixed points of any element in $[\xi]$. 

\begin{lemma}\label{lem:ms}
Let the notations be as above. If $\pp\nmid E$ and $\pp$ is unramified in $L$, then $m_{\pp}(g)=s([\sigma_{\pp}])+O(N(\pp)^{-1/2})$, where $[\sigma_{\pp}]$ is the Frobenius conjugacy class associated to $\pp$.
\end{lemma}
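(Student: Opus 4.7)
The plan is to read $m_{\pp}(g)$ as $N(\pp)^{-(n-1)}|V(\kappa_{\pp})|$ where $V=\{g_{\PP}=0\}\subset\ov{\kappa_{\pp}}^n$, decompose $V$ into its absolutely irreducible components, and then apply Lang--Weil after tracking how Frobenius permutes these components.

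Since $\pp\nmid E$, Lemma \ref{lem:abs-irred} (together with the enlargement of $E$ made immediately afterwards) produces a decomposition $V=V_1\cup\cdots\cup V_r$ in which each $V_i=\{g_{i,\PP}=0\}$ is absolutely irreducible of dimension $n-1$ and the $V_i$ are pairwise distinct. Because $\pp$ is unramified, the decomposition group $G_{\PP}\hookrightarrow G$ is cyclic, generated by a lift of $\sigma_{\pp}$, and the induced action of $\sigma_{\pp}$ on $\{V_1,\ldots,V_r\}$ is compatible with the $G$-action on $\{(g_1),\ldots,(g_r)\}$. In particular, $s([\sigma_{\pp}])$ counts exactly the components $V_i$ fixed by $\sigma_{\pp}$.

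Next I would expand by inclusion--exclusion,
\[ |V(\kappa_{\pp})|=\sum_{\emptyset\neq S\subseteq\{1,\ldots,r\}}(-1)^{|S|+1}\Bigl|\Bigl(\bigcap_{i\in S}V_i\Bigr)(\kappa_{\pp})\Bigr|, \]
and estimate term by term. For $|S|\geq 2$ the intersection $\bigcap_{i\in S}V_i$ has dimension at most $n-2$, since distinct $g_{i,\PP}$'s are coprime absolutely irreducible polynomials of degree at most $d$, so a standard degree-bounded Schwartz--Zippel estimate yields $O(N(\pp)^{n-2})$ rational points. For a singleton $S=\{i\}$, I split into two cases. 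If $\sigma_{\pp}(V_i)=V_i$, then Lemma \ref{lem:galois-descent} applied to the residue-field extension $\kappa_{\PP}/\kappa_{\pp}$ descends $(g_{i,\PP})$ to a polynomial defined over $\kappa_{\pp}$, and Lang--Weil yields $|V_i(\kappa_{\pp})|=N(\pp)^{n-1}+O(N(\pp)^{n-3/2})$. If instead $\sigma_{\pp}(V_i)=V_j$ for some $j\neq i$, then any $\kappa_{\pp}$-rational point of $V_i$ is Frobenius-fixed, hence also lies on $V_j$, hence on $V_i\cap V_j$, contributing at most $O(N(\pp)^{n-2})$.

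Summing and dividing by $N(\pp)^{n-1}$ produces $m_{\pp}(g)=s([\sigma_{\pp}])+O(N(\pp)^{-1/2})$, with implied constant depending only on $K,n,d$, since $r$ and the degrees of all intervening varieties are bounded in those parameters. The main technical step I expect to have to justify carefully is the Galois descent in the fixed-component case, together with the uniform Lang--Weil constants for the descended $V_i$; once those are in hand, the rest is a routine packaging of Lang--Weil and inclusion--exclusion.
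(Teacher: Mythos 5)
Your proposal is correct and follows essentially the same route as the paper: reduce to counting which absolutely irreducible components of $\{g_{\PP}=0\}$ are fixed by $\sigma_{\pp}$, identify that count with $s([\sigma_{\pp}])$ via Lemma \ref{lem:galois-descent}, and invoke Lang--Weil. The only difference is presentational: the paper folds your inclusion--exclusion, the Schwartz--Zippel bound on intersections of distinct components, and the observation that $\kappa_{\pp}$-points on an unfixed component land in an intersection, all into a single citation of Lang--Weil for a reducible variety, whereas you spell those steps out; both are fine.
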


\begin{proof}
Let $h\in\{g_1,\cdots,g_r\}$. Note that $\sigma_{\PP}$ fixes $(h)$ if and only if $\sigma_{\PP}$ fixes $(h_{\PP})$, and this happens if and only if $(h_{\PP})$ is defined over $\kappa_{\pp}$ by Lemma \ref{lem:galois-descent}. Hence $s([\sigma_{\pp}])$ is exactly the number of non-equivalent absolutely irreducible factors of $g_{\pp}$ in $\kappa_{\pp}[\un{X}]$, and the conclusion follows from Lang-Weil.
\end{proof}

We are now ready to evaluate the quantity
\[ M_f(Q)=\sum_{N(\pp^m)\leq Q}m_{\pp}(g)\log N(\pp).  \]
By Lemma \ref{lem:ms}, we have
\[ M_f(Q)=\sum_{\substack{N(\pp^m)\leq Q\\ \pp\text{ unramified in }L}}s([\sigma_{\pp}]^m)\log N(\pp)+O(Q^{1/2}\log Q+\log E+\log \Delta_L).  \] 
Since $E\leq C\exp(Ch(g))$, we have $\log E=O(h(g)+1)$. Hence
\[ M_f(Q)=\sum_C s(C)\psi_C(Q)+O(Q^{1/2}\log Q+h(g)+\log\Delta_L), \]
where the sum is over all conjugacy classes $C$ in $G$, and
\[ \psi_C(Q)=\sum_{\substack{N(\pp^m)\leq Q\\ \pp\text{ unramified in }L\\ [\sigma_{\pp}]^m=C}}\log N(\pp). \]
By (a quantitative version of) the Chebotarev density theorem \cite{LO77}, for $Q\geq\exp(C(\log \Delta_L)^2)$ we have
\[ \psi_C(Q)=\frac{|C|}{|G|}Q-\frac{|C|}{|G|}\chi_0(C)\frac{Q^{\beta_0}}{\beta_0}+O(Q\exp(-c(\log Q)^{1/2})),  \]
where the second term occurs only if the Dedekind zeta function $\zeta_L$ has a Siegel zero $\beta_0$, and $\chi_0$ is the real character of a one-dimensional representation of $G$ for which the associated $L$-function has $\beta_0$ as a zero. It follows that
\[ M_f(Q)=Q\cdot \frac{1}{|G|}\sum_{\xi\in G}s(\xi)-\frac{Q^{\beta_0}}{\beta_0}\cdot\frac{1}{|G|}\sum_{\xi\in G}s(\xi)\chi_0(\xi)+O(Q\exp(-c(\log Q)^{1/2})+h(g)+\log\Delta_L). \]
By Burnside's lemma and Lemma \ref{lem:orbit}, we have
\[ \frac{1}{|G|}\sum_{\xi\in G}s(\xi)=s(g). \]
The equality \eqref{eq:m-average} follows by setting
\[ t(g)=\frac{1}{|G|}\sum_{\xi\in G}s(\xi)\chi_0(\xi). \]
By a change of summation, we can write
\[ t(g)=\frac{1}{|G|}\sum_{i=1}^r\sum_{\xi\in G_i}\chi_0(\xi), \]
where $G_i\subset G$ is the subgroup of elements fixing $(g_i)$. Since $\chi_0$ is a one-dimensional real character, the inner sum is either $0$ or $|G_i|$. Hence $t(g)\in [0,s(g)]$, as claimed. Finally, the inequality \eqref{eq:m-average2} follows easily from \eqref{eq:m-average} by dropping the Siegel zero term and partial summation.

\section{Inverse sieve conjecture implies improved larger sieve}\label{sec:inverse-sieve}

In this section we state a precise version of the inverse sieve conjecture and then prove Theorem \ref{thm:improved-larger}. The implied constants here are always allowed to depend on $\alpha,\epsilon$.

\begin{conjecture}[Inverse sieve conjecture]\label{conj:inverse}
Let $X\subset [N]$ be a subset and let $\epsilon>0$ be real. Assume that for each parameter $Q\geq N^{\epsilon}$, we have
\[ \sum_{p\leq Q}\frac{|X\pmod p|}{p}\leq (1-\epsilon)\pi(Q). \]
Then at least one of the following two situations happens:
\begin{enumerate}
\item(very small size) $|X|\ll_{\epsilon} N^{\epsilon}$;
\item (algebraic structure) there exists a polynomial $f(x)\in\Q[x]$ of degree $d\in [2,C]$ and height at most $N^C$ such that $|X\cap f([N])|\geq C^{-1}|X|$, where $C=C(\epsilon)$ is a constant.
\end{enumerate}
\end{conjecture}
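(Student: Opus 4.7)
The conjecture is a well-known open problem --- the paper uses it as a hypothesis, not as a theorem --- so any plan here is necessarily speculative. I describe the attack that seems most natural given the ingredients developed in the paper, together with the obstacle that, as far as I can see, currently blocks it.

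First I would convert the averaged hypothesis into a pointwise statement by pigeonhole: under $\sum_{p\le Q}|X\bmod p|/p \le (1-\epsilon)\pi(Q)$, a positive proportion of primes $p\in[N^\epsilon,2N^\epsilon]$ must satisfy $|X\bmod p|\le(1-\epsilon/2)p$; call these \emph{good} primes and write their set as $\mathcal{P}$. An application of Gallagher's larger sieve (Theorem \ref{thm:gallagher}) to $\mathcal{P}$ already yields $|X|\ll_{\epsilon} N^{1-\epsilon'}$ for some $\epsilon'=\epsilon'(\epsilon)>0$, so conclusion (1) can be assumed to fail and I may work in the regime $|X|\ge N^{2\epsilon}$.

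Next I would try to extract polynomial structure at each good $p$. The aim is to show that, for $p\in\mathcal{P}$, the set $X\bmod p$ is contained up to constant proportion in $f_p(\F_p)$ for some $f_p\in\F_p[x]$ of degree $d\le d(\epsilon)$. Given such a family $\{f_p\}$ of bounded degree, I would aggregate it into a single $f\in\Q[x]$ using a moduli-space argument in the spirit of Section \ref{sec:average}: the coefficient tuples of the $f_p$ should be reductions of a single algebraic point, and a Chebotarev density argument (combined with Gelfond's inequality, Lemma \ref{lem:gelfond}) would force the height of $f$ to be $\ll N^{C(\epsilon)}$. A Bombieri--Pila style determinant method applied to $X$ and the curve $y=f(x)$ would then promote the congruence containments to the integer containment $|X\cap f([N])|\gg|X|$ demanded by conclusion (2).

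The main obstacle is the polynomial extraction step: there is no theorem saying that an arbitrary subset $S\subsetneq\F_p$ with $|S|\le(1-\epsilon/2)p$ must be a polynomial value set, and indeed for generic $S$ this is false. The content of the conjecture is therefore not a fact about individual primes but about \emph{compatibility}: the same integer set $X$ produces compatible reductions simultaneously across all good primes, and it is this compatibility that should force algebraic structure on $X$ itself. Formalizing it --- perhaps via a function-field analogue, an adelic compactness argument, or a new inverse theorem for small mod-$p$ sets --- is precisely where the current literature \cite{HV09,Wal12,GH14,CL07} stops, and no presently available technique bridges the gap. For this reason I would expect a successful proof to require a genuinely new idea rather than an incremental refinement of existing methods.
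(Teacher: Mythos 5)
You have correctly identified that the paper offers \emph{no proof} of this statement: Conjecture~\ref{conj:inverse} is the hypothesis of the paper, not a result. The paper introduces it, motivates its quantitative shape in the remarks immediately following its statement, cites \cite{CL07,HV09,Wal12,GH14} as evidence, and then in Section~\ref{sec:inverse-sieve} assumes its truth in order to deduce Theorem~\ref{thm:improved-larger} via Proposition~\ref{prop:inverse-large-deg}. So there is no argument in the paper for your proposal to be compared against; your recognition that the statement is open is the right answer, and your identification of the core obstacle (there is no inverse theorem classifying sets $S\subset\F_p$ with $|S|\le(1-\epsilon/2)p$, and the whole content of the conjecture is a compatibility-across-primes phenomenon that no current technique captures) matches the state of the literature the paper itself points to.

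One technical caveat on your speculative sketch, for your own benefit: the claim that Gallagher's larger sieve applied to the good primes in $[N^\epsilon,2N^\epsilon]$ gives $|X|\ll_\epsilon N^{1-\epsilon'}$ does not go through. With $\mathcal{P}$ restricted to $p\le 2N^\epsilon$ and only the weak density bound $|X\bmod p|\le(1-\epsilon/2)p$ on a positive proportion of those primes, the denominator $\sum_{p\in\mathcal{P}}|X\bmod p|^{-1}\log p-\log N$ in Theorem~\ref{thm:gallagher} is negative (it is roughly of size $\epsilon^2\log N-\log N$), so the sieve bound is vacuous. The nontrivial bound in this regime is $|X|\ll N^{1/2}$, which comes from the large sieve rather than the larger sieve, as the paper notes in the remark after Conjecture~\ref{conj:inverse}. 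Also, failure of conclusion~(1) only gives $|X|\gg_\epsilon N^\epsilon$, not $|X|\ge N^{2\epsilon}$. These are minor points since the whole sketch is avowedly speculative, but worth flagging.
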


Here, we say that a polynomial $f(x)\in\Q[x]$ has height at most $H$ if $f(x)=A^{-1}f^*(x)$ for some positive integer $A\leq H$ and $f^*\in\Z[x]$ with all coefficients bounded by $H$ in absolute value. This is slightly different from the notion of height used in the statement of Theorem \ref{thm:m-average-special}, in that $h(f)$ is invariant under scalar multiplication but the notion here is not. Note that if a polynomial $f(x)\in\Q[x]$ has height at most $H$, then $h(f)\ll \log H$.

\begin{remark}
We make a few remarks concerning why some quantitative aspects of this conjecture are reasonable.
\begin{itemize}
\item The condition on $X$ essentially says that $X$ misses a positive proportion of residue classes modulo primes $p$ on average, as soon as $p$ exceeds a small positive power of $N$. With this assumption we know from the large sieve that $|X|\ll N^{1/2}$ and from the larger sieve that $|X|\ll N^{\alpha+O(\epsilon)}$ if the upper bound $(1-\epsilon)\pi(Q)$ is replaced by $\alpha\pi(Q)$. Without the knowledge about $X\pmod p$ for $p\leq N^{\epsilon}$, one can essentially add to $X$ any $N^{\epsilon}$ extra elements without violating the assumption, but one should still expect to see algebraic structure apart from these extra elements.

\item The conclusion $|X\cap f([N])|\geq C^{-1}|X|$ is equivalent to the seemingly weaker one $|X\cap f(\Q)|\geq C^{-1}|X|$, after a suitable modification of the polynomial $f$ which does not increase its height too much. To see that the interval $[N]$ can be replaced by $\Z$, note that the set $J=\{n\in\Z:1\leq f(n)\leq N\}$ is the union of at most $d$ intervals and has size at most $dN$. Since $X\cap f(\Z)=X\cap f(J)$, there is an interval $I\subset J$ with $|X\cap f(I)|\geq d^{-1}|X\cap f(\Z)|$,  and we may assume that $I\subset [N]$ after a translation. To see that $f(\Z)$ can be replaced by $f(\Q)$, note that if $f(x)\in\Z$ for some $x\in\Q$ then the denominator of $x$ must divide some positive integer $B$ depending on the coefficient of $f$. Then $f(\Q)\cap\Z\subset f^*(\Z)\cap\Z$, where $f^*$ is defined by $f^*(x)=f(x/B)$.

\item The conclusion that $f(\Z)$ captures a positive proportion of $X$ cannot be replaced by the stronger one that $f(\Z)$ captures almost all of $X$. Indeed, it is possible for $X$ to be the union of $f(\Z)$ for several distinct polynomials $f$.
\end{itemize}
\end{remark}

If $|X\pmod p|\leq \alpha p$ for small $\alpha$, repeated applications of Conjecture \ref{conj:inverse} allows us to strengthen it by requiring the degree $d$ to be fairly large.

\begin{proposition}[Inverse sieve conjecture in the larger sieve regime]\label{prop:inverse-large-deg}
Assume the truth of Conjecture \ref{conj:inverse}. Let $X\subset [N]$ be a subset. Let $\alpha\in (0,1)$ and $\epsilon\in (0,\alpha)$ be real. Assume that $|X\pmod p|\leq (\alpha+o(1)) p$ for each prime $p$. Then at least one of the following two situations happens:
\begin{enumerate}
\item (very small size) $|X|\ll_{\epsilon} N^{\epsilon}$;
\item (algebraic structure) there exists a polynomial $f(x)\in\Q[x]$ of degree $d\in [2,C]$ and height at most $N^C$ such that $|X\cap f(\Z)|\geq C^{-1}|X|$, where $C=C(\epsilon)$ is a constant. Moreover, we may ensure that $\tau(d)\geq (1-\epsilon)\alpha^{-1}$. 
\end{enumerate}
\end{proposition}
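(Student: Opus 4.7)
The plan is to iterate Conjecture~\ref{conj:inverse}. Set $F_0(x)=x$ and $X_0=X$, and suppose inductively that we have constructed a polynomial $F_i\in\Q[x]$ of degree $D_i$ together with a set $X_i$, contained in an interval of length $\ll N^{1/D_i}$, satisfying $F_i(X_i)\subset X$ and $|X_i|\geq c_i|X|$ for some positive $c_i=c_i(\alpha,\epsilon)$. If $\tau(D_i)\geq(1-\epsilon)\alpha^{-1}$, we halt and take $f=F_i$. Otherwise, fix $\epsilon'=\min(\epsilon,1-\alpha)/3$ and apply Conjecture~\ref{conj:inverse} to $X_i$ with parameter $\epsilon'$: the conclusion is either $|X_i|\ll N^{\epsilon'}\leq N^{\epsilon}$ (forcing $|X|\ll_{\alpha,\epsilon}N^{\epsilon}$, i.e.\ case~(1) of the Proposition), or we obtain a polynomial $f_{i+1}\in\Q[x]$ of degree $d_{i+1}\in[2,C(\alpha,\epsilon)]$ and height $\ll N^{C(\alpha,\epsilon)/D_i}$ with $|X_i\cap f_{i+1}(\Z)|\geq c(\alpha,\epsilon)|X_i|$. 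Set $F_{i+1}=F_i\circ f_{i+1}$ and $X_{i+1}=f_{i+1}^{-1}(X_i\cap f_{i+1}(\Z))$, translated into an interval of length $\ll N^{1/D_{i+1}}$ as in the second bullet of the remark following Conjecture~\ref{conj:inverse}.

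The heart of the argument is verifying the hypothesis $\sum_{p\leq Q}|X_i\pmod p|/p\leq(1-\epsilon')\pi(Q)$ at each stage. For $i=0$ this follows immediately from $|X\pmod p|\leq(\alpha+o(1))p$ and $\epsilon'<1-\alpha$. For $i\geq 1$, every $y\in X_i$ satisfies $F_i(y)\in X$, so $X_i\pmod p$ is contained in the preimage $F_i^{-1}(X\pmod p)\subset\F_p$. Applying Cauchy--Schwarz to this preimage and using the identity $\sum_{t\in F_i(\F_p)}|F_i^{-1}(t)|^2=p\cdot m_p(F_i)$ (immediate from the definition of $m_p$), we obtain
\[
|X_i\pmod p|^2\leq|X\pmod p|\cdot p\cdot m_p(F_i)\leq(\alpha+o(1))\,p^2\,m_p(F_i).
\]
Summing over $p\leq Q$ and applying Cauchy--Schwarz once more, together with Theorem~\ref{thm:m-average-special} (using $s(F_i)\leq\tau(D_i)$), yields
\[
\sum_{p\leq Q}\frac{|X_i\pmod p|}{p}\leq\sqrt{\alpha\,\tau(D_i)}\,\pi(Q)\,(1+o(1)).
\]
So long as $\tau(D_i)<(1-\epsilon)\alpha^{-1}$, $\sqrt{\alpha\tau(D_i)}<\sqrt{1-\epsilon}<1-\epsilon/3\leq 1-\epsilon'$, so the hypothesis holds for all $Q$ beyond a threshold comfortably smaller than $N^{\epsilon'/D_i}$.

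Termination is automatic: since $d_{i+1}\geq 2$ makes $D_i$ a proper divisor of $D_{i+1}$, $\tau(D_{i+1})\geq\tau(D_i)+1$, and therefore $O_{\alpha,\epsilon}(1)$ iterations suffice to reach the halting condition. At termination $f=F_k$ has degree $D_k\leq C(\alpha,\epsilon)$ and height $\leq N^{C(\alpha,\epsilon)}$ (heights compose with a controlled blow-up governed by $h(F_{i+1})\lesssim h(F_i)+D_i\cdot h(f_{i+1})$), and
\[
|X\cap f(\Z)|\geq|f(X_k)|\geq|X_k|/D_k\geq c(\alpha,\epsilon)\,|X|,
\]
yielding case~(2).

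The main obstacle is the average estimate in the second paragraph. The trivial pointwise bound $|X_i\pmod p|\leq D_i\,\alpha\,p$ is useless once $D_i\geq\alpha^{-1}$; only the coupling of $|X\pmod p|$ with the $L^2$ norm of the fibers of $F_i$---which injects $m_p(F_i)$ and then allows Theorem~\ref{thm:m-average-special} with $s(F_i)\leq\tau(D_i)$ to enter---is strong enough to permit iteration past this threshold. The remaining details (propagation of the ``small $X_i$'' outcome back to $|X|\ll_{\alpha,\epsilon}N^\epsilon$, composition of heights, and interval placement at each step) are routine bookkeeping.
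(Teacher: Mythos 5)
Your proposal is correct and takes essentially the same route as the paper: iterate Conjecture~\ref{conj:inverse}, verify the hypothesis at each stage via the chain of two Cauchy--Schwarz applications (pointwise in $p$, then summed over $p\leq Q$) together with Theorem~\ref{thm:m-average-special} and the bound $s(F)\leq\tau(\deg F)$, and terminate since $\tau$ of the composed degree strictly increases. The only cosmetic difference is that you track $X_i$ inside shrinking intervals of length $\sim N^{1/D_i}$ whereas the paper simply keeps $X_i\subset[N]$ throughout; both are fine, and the rest (choice of auxiliary parameter, height composition, and the bijective passage from $X\cap f_i([N])$ to a preimage set $X_i$ with $|X_i|\gg|X_{i-1}|$) matches the paper's argument.
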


\begin{proof}
Suppose that $|X|\gg N^{\epsilon}$. We will apply Conjecture \ref{conj:inverse} iteratively to construct a sequence of polynomials $f_1,f_2,\cdots,f_k$ and a sequence of sets $X_0=X,X_1,X_2,\cdots,X_k$ with $k=O(1)$ such that the following conditions hold:

\begin{enumerate}
\item $\deg f_i=d_i\in [2,C]$, and $\tau(d_1d_2\cdots d_k)\geq (1-\epsilon)\alpha^{-1}$;
\item the height of $f_i$ is at most $N^{O(1)}$ for each $1\leq i\leq k$;
\item $X_i\subset [N]$ and $|X_i|\gg |X_{i-1}|$ for each $1\leq i\leq k$;
\item $f_i(X_i)\subset X_{i-1}$ for each $1\leq i\leq k$.
\end{enumerate}

Suppose first that these objects are constructed. Let $f=f_1\circ f_2\circ\cdots\circ f_k$. By property (1), the degree $d$ of $f$ is $O(1)$ and satisfies $\tau(d)\geq (1-\epsilon)\alpha^{-1}$. By property (2), the height of $f$ is $N^{O(1)}$.  By property (3), we have $|X_k|\gg |X|$. By property (4), we have $f(X_k)\subset X\cap f([N])$. Hence 
\[ |X\cap f([N])|\geq |f(X_k)|\gg |X_k|\gg |X|, \]
as desired.

It thus remains to construct $f_1,\cdots,f_k$ and $X_1,\cdots,X_k$. Suppose that they are already chosen up to $f_{i-1}$ and $X_{i-1}$ for some $i\geq 1$ satisfying the required properties. We will construct $f_i$ and $X_i$ from those. Let $F=f_1\circ\cdots\circ f_{i-1}$ if $i>1$ and let $F$ be the identity map if $i=1$. Let $D$ be the degree of $F$. We may assume that $\tau(D)<(1-\epsilon)\alpha^{-1}$, since we may stop the iteration otherwise. By property (4), we have $F(X_{i-1})\subset X$. 

Let $F=A^{-1}F^*$ with $A\leq N^C$ a positive integer and $F^*\in\Z[x]$ a polynomial whose coefficients are all bounded by $N^C$. Let $p\nmid A$ be a prime. For each $r\in\Z/p\Z$, let $\nu_p(r)$ be the number of $x\in\Z/p\Z$ with $F(x)\equiv r\pmod p$. Then
\begin{align*} 
|X_{i-1}\pmod p| &\leq \sum_{r\in F(X_{i-1})\pmod p}\nu_p(r)\leq |X\pmod p|^{1/2}\left(\sum_r\nu_p(r)^2\right)^{1/2} \\
&\leq (\alpha+o(1))^{1/2} m_p(F^*)^{1/2}p,
\end{align*}
by Cauchy-Schwarz, the assumption that $|X\pmod p|\leq (\alpha+o(1)) p$, and the definition of $m_p(F^*)$. For any $Q\geq N^{\epsilon}$, we then have
\begin{align*} 
\sum_{p\leq Q}\frac{|X_{i-1}\pmod p|}{p} &\leq (\alpha+o(1))^{1/2}\sum_{p\leq Q}m_p(F^*)^{1/2}+O(\log A) \\
&\leq (\alpha+o(1))^{1/2}\pi(Q)^{1/2}\left(\sum_{p\leq Q}m_p(F^*)\right)^{1/2}+O(\log N).
\end{align*}
Now apply Theorem \ref{thm:m-average-special} to obtain
\[ \sum_{p\leq Q}\frac{|X_{i-1}\pmod p|}{p}\leq [(\alpha+o(1))\tau(D)]^{1/2}\pi(Q)+O(Q\exp(-c(\log Q)^{1/2})+Q^{1/2}\log N). \]
Since $\tau(D)<(1-\epsilon)\alpha^{-1}$, the first term above is at most $(1-\epsilon/2)\pi(Q)$, and thus $X_{i-1}$ satisfies the hypotheses in Conjecture \ref{conj:inverse}. Since $|X_{i-1}|\gg N^{\epsilon}$, we must be in the algebraic case. Let $f_i\in \Q[x]$ be a polynomial of degree $d_i\in [2,C]$ and height at most $N^C$ such that $|X_{i-1}\cap f_i([N])|\gg |X_{i-1}|$, and let $X_i\subset [N]$ be chosen such that $f_i(X_i)\subset X_{i-1}$ and $|X_i|\gg |X_{i-1}|$. This completes the inductive construction. Finally, since the quantity $\tau(d_1d_2\cdots d_i)$ strictly increases with $i$, the process terminates after $O(1)$ iterations. 
\end{proof}

\begin{proof}[Proof of Theorem \ref{thm:improved-larger}]

Apply Proposition \ref{prop:inverse-large-deg} to conclude that either $|X|$ is very small and we are done, or else there exists a polynomial $f(x)\in\Q[x]$ of degree $d\in [2,C]$ and height at most $N^C$ such that $|X\cap f([N])|\geq C^{-1}|X|$. Moreover, we have $\tau(d)\geq (1-\epsilon)\alpha^{-1}$. Hence
\[ |X|\ll |X\cap f([N])|\leq |[N]\cap f([N])|\ll N^{1/d}, \]
where the last inequality follows from a result of Walsh \cite{Wal13}, which removes the $\epsilon$ term from the exponent appearing in \cite{BP89,Hea02}. 
\end{proof}

\section{Polynomials with small value sets modulo primes}\label{sec:good-poly}

In this section we prove Theorem \ref{thm:poly-good}. First we state a result connecting the quantity $\alpha_p(f)$ to a Galois group. For a polynomial $f(x)\in\F_p[x]$ of degree $d$, denote by $R_f$  the set of roots in $\overline{\F_p(t)}$ of the polynomial $f(x)-t$. Define
\[ G_f=\Gal(\F_p(R_f)/\F_p(t)),\ \ G_f^*=\Gal(\overline{\F}_p(R_f)/\overline{\F}_p(t)). \]
In other words, $G_f$ and $G_f^*$ are the Galois groups of the splitting field of $f(x)-t$ over $\F_p(t)$ and $\overline{\F}_p(t)$, respectively. It is easy to see that $G_f^*$ is a normal subgroup of $G_f$ with $G_f/G_f^*$ cyclic. In fact, $G_f/G_f^*$ is isomorphic to $\Gal(\F_p(R_f)\cap \overline{\F}_p/\F_p)$.  For any subset $\Xi\subset G_f$, we use $\alpha(\Xi)$ to denote the proportion of elements in $\Xi$ with at least one fixed point, under the natural action on $R_f$.

\begin{lemma}[Cohen]\label{lem:cohen}
Let $f(x)\in \F_p[x]$ be a polynomial of degree $d\geq 1$. Let $\sigma G_f^*$ be the coset which is the Frobenius generator of the cyclic quotient $G_f/G_f^*$. Then 
\[ \alpha_p(f)=\alpha(\sigma G_f^*)+O_d(p^{-1/2}). \]
In particular, if $G_f=G_f^*$ then
\[ \alpha_p(f)=\alpha(G_f)+O_d(p^{-1/2}). \]
\end{lemma}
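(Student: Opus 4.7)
The plan is to realise $f$ as a branched cover of $\mathbb{A}^1$ and to apply a quantitative function field Chebotarev theorem to its Galois closure. Write $K=\F_p(t)$ and $L=\F_p(R_f)$, so that $G_f=\Gal(L/K)$ acts on $R_f$, while $G_f^*\subset G_f$ is the geometric Galois group of the same extension; the quotient $G_f/G_f^*$ identifies canonically with $\Gal(\F_p(R_f)\cap\overline{\F}_p/\F_p)$, so that the coset $\sigma G_f^*$ is precisely the one containing the arithmetic Frobenius.

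For all but $O_d(1)$ values $r\in\F_p$, the prime $(t-r)$ is unramified in $L/K$ and $f(x)-r$ is separable (the branch locus of the cover has size $O_d(1)$). For such $r$, specialising $t\mapsto r$ yields a $G_f$-equivariant bijection between $R_f$ and the roots of $f(x)-r$ in $\overline{\F}_p$, and the Frobenius conjugacy class $[\sigma_r]\subset G_f$ is well-defined; since the residue field at $(t-r)$ is $\F_p$, its image in $G_f/G_f^*$ is the arithmetic Frobenius, hence $[\sigma_r]\subset \sigma G_f^*$. The key observation is that $r\in f_p(\F_p)$ iff $f(x)-r$ has a root in $\F_p$, which by the standard correspondence between $\F_p$-points and Frobenius-fixed points in the fibre happens iff $[\sigma_r]$ has a fixed point on $R_f$. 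Thus
\[
|f_p(\F_p)| = \#\{r\in\F_p : [\sigma_r] \text{ fixes a point of } R_f\} + O_d(1).
\]

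Next I would invoke a quantitative function field Chebotarev theorem for $L/K$, which via the Weil bound applied to the smooth compactification of the covering curve gives, for every conjugacy class $[g]\subset \sigma G_f^*$,
\[
\#\{r\in\F_p : [\sigma_r]=[g]\} = \frac{|[g]|}{|G_f^*|}\,p + O_d(p^{1/2}).
\]
Summing over those $[g]\subset \sigma G_f^*$ whose elements have at least one fixed point on $R_f$ produces
\[
|f_p(\F_p)| = \alpha(\sigma G_f^*)\,p + O_d(p^{1/2}),
\]
and dividing by $p$ yields the stated estimate. The case $G_f=G_f^*$ is then the specialisation $\sigma G_f^*=G_f$.

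The main technical hurdle will be securing a uniform $O_d(p^{1/2})$ error term in the Chebotarev step: one must bound the genus of a normal projective model of $L$ and the size of its branch locus purely in terms of $d$, so that the Weil-type bound absorbs all $d$-dependent prefactors. Riemann--Hurwitz provides such bounds once $p$ is large compared to $d$, while the finitely many small primes $p\le d$ can be swallowed by the implicit constant. A secondary bookkeeping issue is checking that ramification at the branch points and at the place at infinity only contributes $O_d(1)$ exceptional values of $r$, which is immediate from the degree bound.
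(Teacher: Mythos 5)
Your proposal is correct and follows essentially the same route the paper points to: the paper does not prove Lemma \ref{lem:cohen} itself but cites Cohen \cite{Coh70} and remarks that the result is deduced from the function-field Chebotarev density theorem (equivalently, a $0$-dimensional instance of Deligne equidistribution for the étale cover $\mathbb{P}^1\to\mathbb{P}^1$ induced by $f$). Your reduction of $|f_p(\F_p)|$ to counting $r$ with $[\sigma_r]$ fixing a point of $R_f$, followed by equidistribution of the Frobenius classes within the coset $\sigma G_f^*$ with a Weil-bound error term controlled by the genus of a projective model of $L$ (itself bounded in terms of $d$ via Riemann--Hurwitz), is exactly the argument of Cohen's paper that the author is invoking.
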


\begin{remark}
In \cite{Coh70} this is deduced from a function field version of Chebotarev density theorem. The Galois groups $G_f$ and $G_f^*$ above can be interpreted in terms of finite etale Galois coverings of $\mathbb{P}^1(\F_p)$. In this way Lemma \ref{lem:cohen} becomes a $0$-dimensional special case of Deligne's equidistribution theorem. See \cite{Kow10} for an excellent survey on this topic. This function field version of Chebotarev density theorem and related equidistribution results play an important role in proving function field analogues of certain classical analytic number theory conjectures \cite{BBR13,ABR14,Ent14}.
\end{remark}

\begin{proof}[Proof of Theorem \ref{thm:poly-good}]
Recall that the sequence of polynomials $\{f_n\}$ is defined by
\[ f_1(x)=x^2,\ \ f_{n+1}(x)=(f_n(x)+1)^2. \]
Write $G_n=G_{f_n}$, $G_n^*=G_{f_n}^*$, and $R_n=R_{f_n}$. Since any root $\alpha\in R_n$ satisfies either $f_{n-1}(\alpha)=-1+\sqrt{t}$ or $f_{n-1}(\alpha)=-1-\sqrt{t}$, we may decompose $R_n=R_n^+\cup R_n^-$ with
\[ R_n^{\pm}=\{\alpha\in R_n:f_{n-1}(\alpha)=-1\pm\sqrt{t}\}. \]
Note that both $\Gal(\F_p(R_n^+)/\F_p(\sqrt{t})$ and $\Gal(\F_p(R_n^-)/\F_p(\sqrt{t}))$ are isomorphic to $G_{n-1}$, and similarly  both $\Gal(\overline{\F}_p(R_n^+)/\overline{\F}_p(\sqrt{t})$ and $\Gal(\overline{\F}_p(R_n^-)/\overline{\F}_p(\sqrt{t}))$ are isomorphic to $G_{n-1}^*$.

Let $H_n$ and $H_n^*$ be the normal subgroup of $G_n$ and $G_n^*$ that fixes $\sqrt{t}$, so that $[G_n:H_n]=[G_n^*:H_n^*]=2$. Since $H_n$ preserves both $R_n^+$ and $R_n^-$, we get an embedding $\iota_n:H_n\hookrightarrow G_{n-1}\times G_{n-1}$ by setting the first and the second component of $\iota_n(\xi)$ to be the image of $\xi$ under the two quotient maps $H_n\rightarrow \Gal(\F_p(R_n^+)/\F_p(\sqrt{t}))$ and  $H_n\rightarrow \Gal(\F_p(R_n^-)/\F_p(\sqrt{t}))$, respectively. Similarly, we also get an embedding $\iota_n^*:H_n^*\hookrightarrow G_{n-1}^*\times G_{n-1}^*$.

We show, by induction on $n$, that when $p>2f_{n-1}(0)+2$, the embeddings $\iota_n$ and $\iota_n^*$ are in fact isomorphisms, and moreover $G_n=G_n^*$ for each $n$. The base case is clear. Now assume that $G_{n-1}=G_{n-1}^*$. To see that $\iota_n^*$ is surjective, by Lemma 15 in \cite{Fri70} it suffices to verify that for each $\lambda\in \overline{\F}_p$, at most one of the two values $-1+\sqrt{\lambda}$ and $-1-\sqrt{\lambda}$ is a branch point of $f_{n-1}$. By definition, the set of branch points of $f_{n-1}$ is
\[ \{f_{n-1}(x):x\in\overline{\F}_p,f_{n-1}'(x)=0\}. \]
This is easily computed to be the set
\[ \{f_1(0),f_2(0),\cdots,f_{n-1}(0)\}=\{0,1,4,25,\cdots\}. \]
When $p>2f_{n-1}(0)+2$, it is indeed the case that at most one of $-1+\sqrt{\lambda}$ and $-1-\sqrt{\lambda}$ can lie in this set for any $\lambda$. This shows that $H_n^*\cong G_{n-1}^*\times G_{n-1}^*\cong G_{n-1}\times G_{n-1}$. Moreover, since $H_n^*\subset H_n\subset G_{n-1}\times G_{n-1}$, we conclude that $H_n^*=H_n$ and thus $G_n^*=G_n$ as well. This completes the induction step.

With the structure of $G_n$ in hand, it is now a simple matter to write down the recursive relation
\[ \alpha_p(f_n)=\frac{1}{2}[1-(1-\alpha_p(f_{n-1}))^2]=\alpha_p(f_{n-1})-\frac{1}{2}\alpha_p(f_{n-1})^2, \]
provided that $p>2f_{n-1}(0)+2$. In fact, if $\xi\in G_n$ has a fixed point, then $\xi$ must fix $\sqrt{t}$ and thus lie in $H_n$, and moreover at least one of the two components of $\iota_n(\xi)$ has a fixed point. Finally, the bound $a_n\leq 2n^{-1}$  follows from a standard induction argument.
\end{proof}

We remark that if a polynomial $f$ has small value of $\alpha(f)$, then $f$ is necessarily highly decomposable, in the sense that $f$ should be the composition of many polynomials (each of which has degree at least $2$). On the contrary, we say that $f$ is indecomposable if it cannot be written as a composition of two polynomials of degree at least $2$. The following proposition essentially follows from results in \cite{Fri70} (similar arguments are also used in \cite{GW97}).

\begin{proposition}[Indecomposible polynomials have large value sets]\label{prop:indecomposable}
Let $f(x)\in\Z[x]$ be an indecomposable polynomial of degree $d\geq 1$. Then the average value of $\alpha_p(f)^{-1}$ as $p$ varies is at most $2$. Consequently $\alpha(f)\geq 1/2$.
\end{proposition}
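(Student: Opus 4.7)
My plan is to reduce the statement, via Theorem \ref{thm:m-average-special}, to a Galois-theoretic fact about the factorization of $f(x)-f(y)$. Starting from the Cauchy--Schwarz bound $\alpha_p(f)^{-1}\le m_p(f)$ (which is already invoked in deducing Theorem \ref{thm:poly} from Theorem \ref{thm:m-average-special}), we get
$$\lim_{Q\to\infty}\frac{1}{\pi(Q)}\sum_{p\le Q}\alpha_p(f)^{-1}\;\le\;\lim_{Q\to\infty}\frac{1}{\pi(Q)}\sum_{p\le Q}m_p(f)\;=\;s(f),$$
where $s(f)$ denotes the number of irreducible factors of $f(x)-f(y)$ in $\Q[x,y]$. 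The whole problem therefore reduces to proving $s(f)=2$ for any indecomposable $f$ of degree $d\ge 2$; since $x-y$ always divides $f(x)-f(y)$, the bound $s(f)\ge 2$ is free.

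To pin down $s(f)$, I would use the Galois-theoretic interpretation. Let $G:=\Gal(f(T)-t\,/\,\Q(t))$ be the arithmetic monodromy of $f$, viewed as a transitive subgroup of $S_d$ via its action on the $d$ roots of $f(T)-t$. An argument parallel to Lemma \ref{lem:orbit} identifies $s(f)$ with the number of orbits of a point stabilizer $G_1\le G$ on the $d$ roots, with the orbit $\{1\}$ accounting for the factor $x-y$. Indecomposability of $f$ over $\Q$ is equivalent to $G_1$ being maximal in $G$, i.e.\ to $G$ being a primitive permutation group. Here \cite{Fri70} enters: the crucial extra feature of polynomial monodromy is that $G$ necessarily contains a $d$-cycle, arising as the inertia generator at the totally ramified place $\infty$, and a primitive subgroup of $S_d$ containing a $d$-cycle is known to be either $2$-transitive or (up to affine equivalence of $f$) one of the short list of exceptions, namely $f\sim x^p$ or $f\sim T_p$ for some prime $p$. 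In the $2$-transitive case, $G_1$ is transitive on the remaining $d-1$ roots and $s(f)=2$ is immediate; in each exceptional case a direct computation of $f(x)-f(y)$ (for instance $x^p-y^p=(x-y)(x^{p-1}+\cdots+y^{p-1})$, with the second factor irreducible over $\Q$ for $p$ prime) confirms $s(f)=2$ as well.

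Granting $s(f)=2$, the AM--HM inequality at each $Q$, combined with taking $Q\to\infty$, yields the second assertion:
$$\alpha(f)\;=\;\lim_{Q\to\infty}\frac{1}{\pi(Q)}\sum_{p\le Q}\alpha_p(f)\;\ge\;\left(\lim_{Q\to\infty}\frac{1}{\pi(Q)}\sum_{p\le Q}\alpha_p(f)^{-1}\right)^{-1}\;\ge\;\frac{1}{2}.$$
The main obstacle is the input from \cite{Fri70}. The translation "indecomposable $\Longleftrightarrow$ primitive monodromy" is standard, but the fact that a primitive subgroup of $S_d$ containing a $d$-cycle is essentially $2$-transitive is the deep classification-type step, and one has to separately verify $s(f)=2$ in the cyclic and dihedral exceptional cases, since primitivity alone does not imply $2$-transitivity for arbitrary permutation groups. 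Everything else in the argument is routine bookkeeping on top of Theorem \ref{thm:m-average-special}.
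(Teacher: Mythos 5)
Your proposal follows essentially the same route as the paper: Cauchy--Schwarz ($\alpha_p(f)^{-1}\leq m_p(f)$), Theorem \ref{thm:m-average-special} to reduce the problem to showing $s(f)\leq 2$, and then Fried's lemmas (indecomposable $\Leftrightarrow$ primitive monodromy, the monodromy contains a $d$-cycle, and primitive $+$ $d$-cycle forces $d$ prime or $2$-transitive). The only place you deviate is the handling of the non-$2$-transitive case, and that deviation is both unnecessary and not quite complete. You invoke the full Schur-conjecture classification of the \emph{polynomials} ($f$ affinely equivalent to $x^p$ or a Chebyshev/Dickson polynomial) and propose to verify $s(f)=2$ by direct factorization in each case; but you only carry out the $x^p$ case and omit the Dickson/Chebyshev one, and in any event this classification is a much deeper statement than what is needed. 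The paper's cleaner observation is that the exceptional branch of Fried's Lemma 7 is simply ``$d$ is prime,'' and then the general bound $s(f)\leq\tau(d)$ (already established in deducing Theorem \ref{thm:poly} from Theorem \ref{thm:m-average-special}) gives $s(f)\leq\tau(d)=2$ with no information about $f$ at all. So your argument is correct in outline and in the $2$-transitive case, but you should replace the exceptional-case detour with the purely numerical fact $\tau(d)=2$ for $d$ prime; as written that step is a gap.
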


\begin{proof}
Let $G$ be the Galois group of the splitting field of $f(x)-t$ over $\Q(t)$, viewed as a subgroup of the symmetric group $S_d$ on $d$ letters via its action on the $d$ roots of $f(x)-t$. Since $f$ is indecomposable, $G$ is primitive (Lemma 2 of \cite{Fri70}). Moreover, $G$ contains a $d$-cycle (Lemma 3 of \cite{Fri70}). Hence either $d$ is prime or $G$ is doubly transitive (Lemma 7 of \cite{Fri70}). In either case, the conclusion follows from Theorem \ref{thm:m-average-special}, since $\tau(d)=2$ when $d$ is prime and $(f(x)-f(y))/(x-y)\in\Q[x,y]$ is irreducible when $G$ is doubly transitive (Lemma 14 of \cite{Fri70}).
\end{proof}

\section{Further remarks}\label{sec:remarks}

\subsection{More on the sharpness of Gallagher's larger sieve}\label{sec:rem-sharp}

We point out here that Gallagher's larger sieve in its general form as stated in Theorem \ref{thm:gallagher} has the optimal bound. Indeed, if we take $A\subset [N]$ to be any subset with cardinality $Q$ and take $\mathcal{P}$ be the set of all primes between $Q$ and $N$, the general form of the larger sieve gives the sharp bound $|A|\ll Q$, because the numerator is about $N$ and the denominator is about $N/Q$. This shows that any potential improvement to Corollary \ref{cor:gallagher} must incorporate the ill distribution modulo many \textit{small} primes.

Under the assumption of Corollary \ref{cor:gallagher}, one may go over the argument in the proof of the larger sieve to find out what happens if $|A|$ is close to $N^{\alpha}$. Indeed, in the typical proof of Gallagher's larger sieve, one uses the upper and lower bounds
\begin{equation}\label{eq:larger-proof} 
\frac{|X|^2}{\alpha}\log Q\leq \sum_{x,x'\in X}\sum_{\substack{p\mid x-x'\\ p\leq Q}}\log p\leq |X|^2\log N+|X|Q,
\end{equation}
where $Q$ is about $N^{\alpha}$. 

If the upper bound is (almost) sharp, then almost all of the nonzero differences $x-x'$ should be $Q$-smooth, meaning that they do not have prime divisors larger than $Q$. For a random integer $n$, it is reasonable to expect that
\begin{equation}\label{eq:expect} 
\sum_{\substack{p\mid n\\ p\leq Q}}\log p\approx \sum_{p\leq Q}\frac{\log p}{p}\sim \log Q.
\end{equation}
If this indeed holds for almost all differences $x-x'$, then one can take $Q$ to be any small power of $N$ and deduce from \eqref{eq:larger-proof} that $|X|\ll N^{\epsilon}$.

Now consider the situation when $X$ is the set of $d$th powers up to $N$. Because of the factorization
\[ a^d-b^d=\prod_{\ell\mid d}\Phi_{\ell}(a,b), \]
where $\Phi_{\ell}$ is the cyclotomic polynomial of degree $\phi(\ell)$, we cannot expect \eqref{eq:expect} to be true for $n=a^d-b^d$. However, it is still reasonable to expect that each factor $\Phi_{\ell}(a,b)$ satisfies \eqref{eq:expect}. If so, then we obtain an upper bound in \eqref{eq:larger-proof} with $\log N$ there replaced by $\tau(d)\log Q$, which in turn implies that $\tau(d)\geq\alpha^{-1}$. This is consistent with the conclusion of Theorem \ref{thm:improved-larger}.

On the other hand, making this heuristic rigorous could be extremely hard. For example, it is an open problem to obtain a bound better than $|X|\ll N^{1/2}$ for $X\subset [N]$ with all nonzero differences $x-x'$ ($x,x'\in X$) $N^{\kappa}$-smooth, where $\kappa>0$ is sufficiently small (see \cite{EH13}).

We also point out that there are versions of Gallagher's larger sieve over arbitrary number field \cite{EEHK09,Zyw10}. One can ask similar questions about its sharpness in this general setting, and use Theorem \ref{thm:m-average-special} to formulate an improved larger sieve conjecture. We will not do so here since the case over $\Z$ is already quite interesting.

\subsection{Computing $\alpha(f)$ via Galois groups}\label{sec:rem-alpha}

The main result of this paper computes the average of $m_p(f)$ as $p$ varies, as a consequence of Chebotarev density theorem. It is natural to ask if one can compute $\alpha(f)$, the average of $\alpha_p(f)$ as $p$ varies, directly, especially since we do have such a formula for each individual $\alpha_p(f)$ as in Lemma \ref{lem:cohen}.

\begin{proposition}\label{prop:alpha-average}
Let $K$ be a number field and $f(x)\in \mathcal{O}_K[x]$ be a monic polynomial of degree $d$. Let $G=\Gal(K(R)/K(t))$, where $R$ is the set of roots of $f(x)-t$. Let $\alpha(G)$ be the proportion of elements in $G$ with at least one fixed point, under the natural action on $R$. Then
\[ \lim_{Q\rightarrow\infty}\frac{1}{\pi(Q)}\sum_{N(\pp)\leq Q}\alpha_{\pp}(f)=\alpha(G). \]
In other words, $\alpha(f)=\alpha(G)$.
\end{proposition}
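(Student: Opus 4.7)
The plan is to derive Proposition~\ref{prop:alpha-average} from Cohen's pointwise formula (Lemma~\ref{lem:cohen}) combined with the Chebotarev density theorem applied to a suitable finite extension of $K$, rather than going through the indirect route via $m_\pp(f)$ used earlier. Set $L = K(R)$, let $G^* = \Gal(L\overline{K}/\overline{K}(t))$ be the geometric subgroup of $G$, and put $K' = L \cap \overline{K}$, so that $G/G^* \cong \Gal(K'/K)$ is a finite Galois extension. The map $\eta G^* \mapsto \alpha(\eta G^*)$ is a class function on $G/G^*$, and an elementary coset count gives the identity
\[
\frac{1}{|G/G^*|} \sum_{\eta G^* \in G/G^*} \alpha(\eta G^*) = \frac{1}{|G|}\sum_{g \in G} \mathbf{1}[g \text{ fixes some element of } R] = \alpha(G),
\]
which is the target value of the prime average.

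Next I would carry out the specialization step: for all but finitely many primes $\pp$ of $\mathcal{O}_K$ (namely those coprime to the discriminant of $f$ and unramified in $K'$), the reduction $f_\pp \in \kappa_\pp[x]$ has Galois groups $G_{f_\pp}$ and $G_{f_\pp}^*$ that match their characteristic-zero counterparts in the following sense: $G_{f_\pp}^*$ is naturally identified with $G^*$, and $G_{f_\pp}$ with the preimage in $G$ of the cyclic subgroup of $G/G^*$ generated by the Frobenius class $\mathrm{Frob}_\pp$. Under this identification, the distinguished Frobenius coset $\sigma_\pp G_{f_\pp}^*$ appearing in Lemma~\ref{lem:cohen} corresponds to $\mathrm{Frob}_\pp \cdot G^*$ in $G/G^*$. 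Hence Lemma~\ref{lem:cohen} becomes $\alpha_\pp(f) = \alpha(\mathrm{Frob}_\pp\, G^*) + O_d(N(\pp)^{-1/2})$ for almost every $\pp$, with the coset interpreted inside $G/G^*$.

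I would then apply the Chebotarev density theorem to $K'/K$ to conclude that the Frobenius classes $\mathrm{Frob}_\pp$ equidistribute among the conjugacy classes of $G/G^*$ as $\pp$ varies. Since $\alpha(\,\cdot\, G^*)$ is a class function on $G/G^*$, the average of $\alpha(\mathrm{Frob}_\pp G^*)$ over primes with $N(\pp) \le Q$ tends to $\alpha(G)$ by the identity from the first paragraph, while the pointwise error $O(N(\pp)^{-1/2})$ contributes $o(1)$ after averaging. The main obstacle is the specialization step: one must verify that the geometric Galois group is preserved under reduction modulo $\pp$ for all but finitely many $\pp$, and that the arithmetic quotient $G_{f_\pp}/G_{f_\pp}^*$ is generated by $\mathrm{Frob}_\pp$ in a way compatible with Cohen's normalization. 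This is a standard consequence of the good-reduction theory of the branched cover $\{f(x)=t\} \to \mathbb{A}^1_t$ (or equivalently of the \'etale fundamental group of $\mathbb{A}^1_t$ minus the branch locus of $f$), but making the Frobenius correspondence explicit and checking the compatibility with Lemma~\ref{lem:cohen} is the delicate part of the argument.
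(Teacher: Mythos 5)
Your proposal matches the paper's actual proof essentially step for step: both derive the result by applying Cohen's formula (Lemma~\ref{lem:cohen}) prime by prime, translating the Frobenius coset into a conjugacy class in the finite quotient $G/G^*\cong\Gal(K'/K)$ with $K' = K(R)\cap\overline{K}$, and then combining the Chebotarev density theorem for $K'/K$ with the coset-averaging identity $\frac{1}{|G/G^*|}\sum_{\eta G^*}\alpha(\eta G^*)=\alpha(G)$. The specialization step you rightly flag as the delicate part is carried out concretely in the paper by choosing a primitive element $\theta$ of $K(R)$ over $K'(t)$, writing down its (absolutely irreducible) bivariate minimal polynomial $h(t,y)$, and invoking Noether's criterion (Lemmas~\ref{lem:noether} and~\ref{lem:abs-irred}) to keep $h_{\PP}$ absolutely irreducible for all but finitely many $\PP$, which pins down the identifications $G^*_{f_\pp}\cong G^*$ and $\sigma_\pp G^* \leftrightarrow \mathrm{Frob}_\pp G^*$ that your argument needs.
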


\begin{remark}
Unfortunately, we are unable to use this interpretation to obtain good lower bounds on $\alpha(f)$, but see \cite{GW97} for an example where large values of $\alpha_p(f)$ are studied via Galois groups. On the other hand, we feel that any possible improvement to the bound $\alpha(f)\geq \tau(d)^{-1}$ is likely to come from studying the Galois group $G$.
\end{remark}

\begin{proof}
Write $E=K(R)$. Let $G^*=\Gal(\overline{K}(R)/\overline{K}(t))$. Let $L=E\cap \overline{K}$ be the algebraic closure of $K$ in $E$, so that $E=L(R)$ and $G^*=\Gal(E/L(t))$. By the primitive element theorem, there exists $\theta\in E$ such that $E=L(t,\theta)$. Suppose that $\theta$ satisfies the relation
\[ h_m(t)\theta^m+\cdots h_1(t)\theta+h_0(t)=0, \]
where $m=[E:L(t)]$ and $h_m(t),\cdots,h_1(t),h_0(t)$ are relatively prime polynomials over $L$. Let $h\in L[t,y]$ be the two variable polynomial defined by
\[ h(t,y)=h_m(t)y^m+\cdots h_1(t)y+h_0(t). \]
Clearly $h$ is a minimal polynomial of $\theta$, and thus $h$ is irreducible. By the definition of $L$, the polynomial $h$ is also absolutely irreducible.

Let $\pp\subset\mathcal{O}_K$ be a prime in $K$ and $\PP\subset\mathcal{O}_L$ be a prime in $L$ lying above $\pp$. By Lemmas \ref{lem:noether} and \ref{lem:abs-irred}, $h_{\PP}\in\kappa_{\PP}[t,y]$ remains absolutely irreducible for all but finitely many $\PP$. Let $\theta_{\PP}\in\overline{\kappa_{\PP}(t)}$ be an element satisfying $h_{\PP}(t,\theta_{\PP})=0$, so that $E_{\PP}=\kappa_{\PP}(t,\theta_{\PP})$ is a degree $m$ field extension of $\kappa_{\PP}(t)$ with $E_{\PP}\cap \overline{\kappa_{\pp}}=\kappa_{\PP}$. Since $E/L(t)$ is Galois, all roots of $h(t,y)$ in $\overline{L(t)}$ lie in $E$. This implies that all roots of $h_{\PP}(t,y)$ in $\overline{\kappa_{\PP}(t)}$ lie in $E_{\PP}$ for all but finitely many $\PP$, and thus $E_{\PP}/\kappa_{\PP}(t)$ is also Galois. Note that there is a natural isomorphism $G^*=\Gal(E/L(t))\cong \Gal(E_{\PP}/\kappa_{\PP}(t))$, since an element in either Galois group is determined by its image of $\theta$ or $\theta_{\PP}$.

Now we look at the polynomial $f(x)-t$. Since it factors into linear factors over $E$, its reduction $f_{\pp}(x)-t$ factors into linear factors over $E_{\PP}$ for all but finitely many $\PP$. By an abuse of notation, we will continue to write $R$ for the set of roots of $f_{\pp}(x)-t$ in $\overline{\kappa_{\pp}(t)}$. Therefore the splitting fields $\kappa_{\PP}(R)$ and $\kappa_{\pp}(R)$ are contained in $E_{\PP}$. On the other hand, since $\theta\in K(R)$ and $L\subset K(R)$, we have $\theta_{\PP}\in\kappa_{\pp}(R)$ and $\kappa_{\PP}\subset\kappa_{\pp}(R)$ for all but finitely many $\PP$. This shows that $\kappa_{\pp}(R)=E_{\PP}$.

Let $\sigma_{\PP}G^*$ be the coset which is the inverse image of the Frobenius automorphism $\sigma_{\PP}$ under the quotient map 
\[ \Gal(E_{\PP}/\kappa_{\pp}(t))\twoheadrightarrow \Gal(\kappa_{\PP}(t)/\kappa_{\pp}(t))=\Gal(\kappa_{\PP}/\kappa_{\pp}), \]
which has kernel $\Gal(E_{\PP}/\kappa_{\PP}(t))=G^*$. By Lemma \ref{lem:cohen}, we have
\[ \alpha_{\pp}(f)=\alpha(\sigma_{\PP}G^*)+O_d(N(\pp)^{-1/2}). \]
Note that the quantity $\alpha(\sigma_{\PP}G^*)$ does not depend on the choice $\PP$. Via the inclusion $\Gal(\kappa_{\PP}/\kappa_{\pp})\hookrightarrow \Gal(L/K)$, we may view $\sigma_{\PP}$ as an element in $\Gal(L/K)$ and $\sigma_{\PP}G^*$ as a coset in $G$. By Chebotarev density theorem, the cosets $\sigma_{\PP}G^*$ become equidistributed in $G$ as $\pp$ varies. Therefore $\alpha(f)=\alpha(G)$ as desired.
\end{proof}

For a generic polynomial of degree $d$, the Galois group $G$ in Proposition \ref{prop:alpha-average} is the full symmetric group $S_d$, and thus 
\[ \alpha(f)=1-\frac{1}{2}+\frac{1}{6}-\frac{1}{24}+\cdots+\frac{(-1)^{d-1}}{d!}  \]
for a typical $f$ of degree $d$. Moreover, this quantity tends to $1-e^{-1}$ as $d\rightarrow\infty$.

For $d\leq 4$, we have the following sharp lower bounds.

\begin{proposition}[Polynomials of small degree]\label{prop:small-degree}
For a positive integer $d$, let $\alpha_d$ be the smallest possible value of $\alpha(f)$, where $f\in\Q[x]$ is a polynomial of degree $d$. Then $\alpha_2=1/2$, $\alpha_3=2/3$, and $\alpha_4=3/8$.
\end{proposition}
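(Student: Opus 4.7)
The plan is to invoke Proposition~\ref{prop:alpha-average}, reducing $\alpha(f)$ to the group-theoretic quantity $\alpha(G)$, where $G=\Gal(\Q(R)/\Q(t))$ and $R$ is the set of roots of $f(x)-t$. (Since $\alpha_p(f)$ is invariant under rescaling, we may assume $f$ is monic.) Because $f(x)-t$ is irreducible in $\Q[x,t]$, $G$ embeds as a transitive subgroup of $S_d$. For each $d\in\{2,3,4\}$ my strategy is to enumerate the transitive subgroups of $S_d$, decide which of them can actually arise as such a Galois group for some $f\in\Q[x]$ of degree $d$, and compute $\alpha(G)$ directly. In every case the upper bound $\alpha_d\le$ claimed value will be witnessed by $f(x)=x^d$, giving $G=S_2$, $S_3$, $D_4$ with $\alpha(G)=1/2$, $2/3$, $3/8$ respectively.

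For $d=2$ the only transitive subgroup is $S_2$, and $\alpha(S_2)=1/2$. For $d=3$ the transitive subgroups are $A_3$ and $S_3$; the case $G=A_3$ is equivalent to $\mathrm{disc}(f(x)-t)$ being a square in $\Q(t)$, which I would rule out by a direct computation: after depressing $f$ to $ax^3+bx+c$, one finds $\mathrm{disc}(f(x)-t)=-4ab^3-27a^2(c-t)^2$, a quadratic polynomial in $t$ with leading coefficient $-27a^2<0$, hence never a square in $\Q[t]$ (and so not in $\Q(t)$ either, by a Gauss-type argument). Therefore $G=S_3$ and $\alpha(S_3)=4/6=2/3$.

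The degree-$4$ case is more delicate. The transitive subgroups of $S_4$ are $C_4,V_4,D_4,A_4,S_4$, with $\alpha$-values $1/4$, $1/4$, $3/8$, $3/4$, $5/8$. The essential constraint is that the geometric Galois group $G^*\subseteq G$ contains a full $4$-cycle, arising as the local monodromy at $\infty$: since $f$ has a unique, totally ramified preimage of $\infty$ of order $4$, a small loop around $\infty$ in the target lifts to a $4$-cycle cyclically permuting the four branches. This immediately excludes $V_4$ and $A_4$. To eliminate $G=C_4$ over $\Q$, note that then $G^*=C_4$ too (the only subgroup of $C_4$ containing a $4$-cycle), and a Riemann--Hurwitz count for a $C_4$-Galois cover $\mathbb{P}^1\to\mathbb{P}^1$ totally ramified at $\infty$ forces exactly one further totally ramified branch point in the affine part. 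The corresponding ramification point in the source is a triple root of $f'\in\Q[x]$, hence lies in $\Q$, yielding $f(x)=\alpha(x-a)^4+c$ with $\alpha,a,c\in\Q$. But the splitting field of $\alpha(x-a)^4-(t-c)$ over $\Q(t)$ visibly contains $i=\zeta_4$, forcing $[G:G^*]\ge 2$, so actually $G=D_4$ and not $C_4$. Consequently $G\in\{D_4,S_4\}$ and $\alpha_4=\alpha(D_4)=3/8$.

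The principal obstacle is the degree-$4$ analysis, specifically combining three ingredients: the monodromy-at-infinity computation (a local ramification calculation in the same spirit as the branch-point analysis in Section~\ref{sec:good-poly}), the Riemann--Hurwitz count for $C_4$-covers, and the $\Q$-descent step ruling out $G=C_4$ by distinguishing $G$ from $G^*$. The last step is the subtlest: geometrically $C_4$-covers certainly exist, but the required $\zeta_4=i$ is not in $\Q$, and adjoining it doubles the Galois group to $D_4$ over $\Q$, so the primitive case and the non-realized $C_4$ case both collapse into the two possibilities $D_4$ and $S_4$.
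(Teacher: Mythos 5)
Your proof is correct but follows a genuinely different route from the paper's. The paper argues uniformly for $d\in\{3,4\}$: for any sufficiently large integer $t$, the polynomial $f(x)=t$ has at least one real and at least one non-real root, so its splitting field over $\Q$ properly contains $\Q(\alpha)$ for a real root $\alpha$ and hence has degree $>d$; by Hilbert's irreducibility theorem the same holds for the generic group $G$, which rules out every transitive subgroup of order exactly $d$ (so $\Z/3\Z$ for $d=3$, and both $\Z/4\Z$ and $V_4$ for $d=4$) in one stroke, leaving $G=S_3$ or $G\in\{D_4,A_4,S_4\}$ and the minimum $\alpha(G)$ is then read off directly. You instead handle $d=3$ by a direct discriminant computation (a degree-$2$ polynomial in $t$ with negative leading coefficient is never a square in $\Q(t)$), and $d=4$ by a three-ingredient geometric argument: the local monodromy $4$-cycle at $\infty$ excludes $V_4$ and $A_4$; a Riemann--Hurwitz count for a hypothetical $C_4$-cover pins down the ramification; and a $\Q$-rationality/descent step using $\zeta_4\notin\Q$ shows that the would-be $C_4$ example $f(x)=\alpha(x-a)^4+c$ in fact has $G=D_4$. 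Both arguments are sound. The paper's is shorter and requires no case-specific geometry, while yours is more structural, resonates with the $d$-cycle-at-infinity mechanism already used in the proof of Proposition \ref{prop:indecomposable}, and incidentally eliminates $A_4$ as well --- a stronger conclusion, though unnecessary here since $\alpha(A_4)=3/4>3/8$.
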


\begin{proof}
For $d=2$ this is obvious. Suppose that $d\in\{3,4\}$. Let $G$ be the Galois group as in Proposition \ref{prop:alpha-average}. We claim that $G\neq \Z/d\Z$, the cyclic group of order $d$. In fact, for $t\in\Z$ sufficiently large, the polynomial $f(x)=t$ has at least one real root and at least one non-real root. Let $\alpha\in\R$ be a real root of $f(x)=t$. Then the splitting field of $f(x)-t$ contains properly the subfield $\Q(\alpha)$, and thus has degree larger than $d$ over $\Q$. This shows that the Galois group of $f(x)-t$ is not $\Z/d\Z$ for all $t$ sufficiently large. The fact that $G\neq\Z/d\Z$ then follows from Hilbert's irreducibility theorem. Now that $G\subset S_d$ is transitive and $G\neq \Z/d\Z$, the only possibilities are $G=S_3$ when $d=3$ and $G\in\{S_4,A_4,D_4\}$ when $d=4$. The conclusion follows by computing $\alpha(G)$ for these choices of $G$.  
\end{proof}

Not surprisingly, the nature of $\alpha_d$ depends not only on the size of $d$, but also the arithmetic of $d$ (cf. Proposition \ref{prop:indecomposable}). In general, given a transitive subgroup $G\subset S_d$, we do not know how to tell whether $G$ can be realized as a Galois group as in Proposition \ref{prop:alpha-average}.

\bibliographystyle{plain}
\bibliography{sieve}{}

\end{document}